\newtheorem{thm}{Theorem}
\newtheorem{lem}[thm]{Lemma}
\newtheorem{claim}{Claim}
\DeclareMathOperator{\conv}{conv}
\newtheorem{prop}[thm]{Proposition}
\newcommand\cC{{\mathcal C}}
\newcommand\cF{{\mathcal F}}
\newcommand\cG{{\mathcal G}}
\newcommand\cH{{\mathcal H}}
\title{The profile polytope of non-trivial intersecting families}
\author{D\'aniel Gerbner\footnote{Alfr\'ed R\'enyi Institute of Mathematics, E-mail: \texttt{gerbner.daniel@renyi.hu.} Research supported by the
    National Research, Development and Innovation Office -- NKFIH under the
    grants FK 132060, KKP-133819, KH130371 and SNN 129364.}}
\date{}
\begin{document}

\maketitle

\newcommand{\inte}{\mathop{\mathrm{int}}\nolimits}
\newcommand{\co}{\mathop{\mathrm{co}}\nolimits}

\begin{abstract}

The profile vector of a family $\mathcal{F}$ of subsets of an
$n$-element set is $(f_0,f_1, \ldots, f_n)$ where $f_i$ denotes
the number of the $i$-element members of $\mathcal{F}$. In this
paper we determine the extreme points of the set of profile
vectors for the class of non-trivial intersecting families.

\end{abstract}

\section{Introduction and preliminaries}

Let $[n]=\{1,\dots,n\}$ be our underlying set. If $F\subseteq [n]$,
then $\overline{F}$ denotes the complement of $F$. Let
$\mathcal{F}$ be a family of subsets of $[n]$
(i.e. $\mathcal{F}\subseteq 2^{[n]}$). Let $\overline{\cF}:=\{F\subset [n]: \overline{F}\in\cF\}$. A family is called \emph{intersecting} if any
two members have non-empty intersection. Intersecting families of sets have attracted a lot of researchers, see e.g. Chapter 2 of the book \cite{book}. Let us start with a well-known and trivial statement.

\begin{prop} The maximum size of an intersecting family is
$2^{n-1}$.

\end{prop}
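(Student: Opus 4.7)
The plan is to establish the upper bound via a simple pairing (complementation) argument, and then exhibit an explicit intersecting family of size $2^{n-1}$ to show the bound is sharp.

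First I would partition the power set $2^{[n]}$ into the $2^{n-1}$ complementary pairs $\{F,\overline{F}\}$. The key observation is that $F\cap\overline{F}=\emptyset$, so any intersecting family $\cF$ can contain at most one set from each such pair. Summing over the $2^{n-1}$ pairs immediately yields $|\cF|\le 2^{n-1}$. (A minor point to note is that when $n\ge 1$ no set coincides with its complement, so the pairs are genuine pairs of distinct sets; this is the only place where a degenerate case could in principle intrude.)

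For the matching lower bound, I would exhibit the star family $\cF=\{F\subseteq[n]:1\in F\}$, which clearly has $2^{n-1}$ members and is intersecting since any two of its sets share the element $1$.

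There is no real obstacle here: the argument is a one-line double-counting together with a one-line construction. The only thing worth flagging is that the extremal families are far from unique (any star works, and many non-star examples exist as well), which is precisely why the finer question of determining the extreme points of the profile polytope—and the further restriction to \emph{non-trivial} intersecting families studied in the rest of the paper—is the substantive problem.
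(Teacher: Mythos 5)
Your complementation argument is correct and is the standard proof; the paper states this proposition without proof as a well-known triviality, merely noting the star family achieves the bound, which matches your lower-bound construction. Nothing to add.
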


The maximum size is achieved e.g. by the family of all subsets containing a given fixed element. A family is called \emph{$k$-uniform}, if all its members have cardinality $k$. Let $\mathcal{F}_k$ denote the subfamily of the
$k$-element subsets in $\mathcal{F}$:\, $\mathcal{F}_k=\{F:
F\in\mathcal{F}, |F|=k\}$.

\begin{thm}[Erd\H{o}s, Ko, Rado \cite{ekr}] Let $k \le n/2$. Then the maximum size of a $k$-uniform intersecting family is $\binom{n-1}{k-1}$.

\end{thm}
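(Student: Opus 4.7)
The plan is to prove the Erd\H{o}s--Ko--Rado theorem by Katona's cyclic permutation method, which I find the cleanest route and which avoids the inductive bookkeeping of the shifting argument.

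First I would set up a double count over cyclic orderings of $[n]$. Fix a cyclic arrangement of $[n]$; call a $k$-subset of $[n]$ an \emph{arc} of this arrangement if its elements occupy $k$ consecutive positions on the cycle. The quantity to estimate is
\[
 N \;=\; \sum_{\sigma} \bigl| \{ F \in \cF_k : F \text{ is an arc of } \sigma\} \bigr|,
\]
where $\sigma$ ranges over the $(n-1)!$ cyclic orderings. On the one hand, every fixed $k$-set is an arc of exactly $k!(n-k)!$ cyclic orderings (choose the position of the block, order inside, order outside, divide by rotations), so
\[
 N \;=\; |\cF_k|\cdot k!(n-k)!.
\]
On the other hand, the cycle lemma below will bound the inner count by $k$, giving $N\le k\cdot(n-1)!$, and rearranging yields $|\cF_k|\le \binom{n-1}{k-1}$.

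The heart of the argument is therefore the following cycle lemma: \emph{for any fixed cyclic ordering of $[n]$ with $n\ge 2k$, an intersecting family of $k$-arcs contains at most $k$ arcs}. I would prove this by picking one arc $A$ in the family, labelling positions so that $A$ occupies $\{0,1,\ldots,k-1\}$, and observing that any arc meeting $A$ must have its starting position in $\{-(k-1),\ldots,k-1\}$. For each $i\in\{1,\ldots,k-1\}$ the two arcs starting at position $i$ and at position $i-k$ are disjoint (here is where $n\ge 2k$ is used), so at most one of each such pair belongs to the family. Together with $A$ itself this gives at most $1+(k-1)=k$ arcs.

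The main obstacle, and the only delicate point, is precisely this cycle lemma, and in particular the use of $k\le n/2$ to guarantee that the ``opposite'' arc $i-k$ is genuinely disjoint from $i$ rather than wrapping around. Once the lemma is in place the double count closes immediately, and the bound is attained by taking $\cF_k$ to be all $k$-sets containing a fixed element, which gives $\binom{n-1}{k-1}$ sets and is obviously intersecting, confirming tightness.
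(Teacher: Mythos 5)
Your proof is correct; this is Katona's cyclic permutation proof of the Erd\H{o}s--Ko--Rado theorem, and all the key steps check out. The double count is set up properly: each $k$-set is an arc of exactly $k!(n-k)!$ of the $(n-1)!$ cyclic orderings (fixing a designated arc position determines the rotation, and $F$ can be an arc in at most one position when $k<n$), giving $N=|\cF_k|\cdot k!(n-k)!$. The cycle lemma is also argued correctly: with $A$ occupying positions $\{0,\ldots,k-1\}$, any other arc of $\cF$ meeting $A$ starts at some $i\in\{-(k-1),\ldots,-1,1,\ldots,k-1\}$, and the pairing of positions $i$ and $i-k$ (for $i=1,\ldots,k-1$) into disjoint arcs, valid precisely because $n\ge 2k$, caps the count at $1+(k-1)=k$. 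Combining gives $|\cF_k|\le k(n-1)!/\bigl(k!(n-k)!\bigr)=\binom{n-1}{k-1}$, with tightness from the trivial star.

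For context, the paper states the Erd\H{o}s--Ko--Rado theorem only as a cited background result (reference \cite{ekr}) and gives no proof of it, so there is no in-paper argument to compare against. Your choice of Katona's cycle method is a standard and clean route; the original Erd\H{o}s--Ko--Rado proof used shifting/compression instead, which has the advantage of extending to $t$-intersecting families but requires more bookkeeping. Either would serve as a valid self-contained proof of the cited theorem.
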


Let us call an intersecting family \emph{trivial} if all its members contain a given fixed element, and non-trivial otherwise. The maximum in the above theorem is again achieved by the largest trivial intersecting family.

\begin{thm}[Hilton, Milner \cite{hm}] Let $k \le n/2$. Then the maximum size of a non-trivial $k$-uniform intersecting family is $1+\binom{n-1}{k-1}-\binom{n-k-1}{k-1}$.

\end{thm}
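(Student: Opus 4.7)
The plan is to establish the bound by exhibiting a matching construction and then proving the upper bound via Frankl's shifting technique followed by a structural analysis.

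\emph{Construction.} Fix an element $x\in[n]$ and a $k$-set $B\subseteq[n]\setminus\{x\}$, and set
\[
\mathcal{H}=\{B\}\cup\{F\subseteq[n]:|F|=k,\ x\in F,\ F\cap B\ne\emptyset\}.
\]
Two members containing $x$ share $x$; every other member meets $B$ by definition; and $x\notin B$ certifies non-triviality. Counting the $k$-sets through $x$ that miss $B$ gives $|\mathcal{H}|=1+\binom{n-1}{k-1}-\binom{n-k-1}{k-1}$, matching the claimed bound.

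\emph{Upper bound via shifting.} Given a non-trivial intersecting $k$-uniform family $\mathcal{F}$, I would apply Frankl's $(i,j)$-shift for each pair $i<j$: whenever $F\in\mathcal{F}$ satisfies $j\in F$, $i\notin F$, and $(F\setminus\{j\})\cup\{i\}\notin\mathcal{F}$, replace $F$ by $(F\setminus\{j\})\cup\{i\}$. Each shift preserves both $|\mathcal{F}|$ and the intersection property, but may destroy non-triviality. A preliminary lemma (valid for $n\ge 2k$) shows that one can always perform enough shifts while retaining non-triviality, yielding a left-shifted non-trivial intersecting family $\mathcal{F}^{*}$ of the same size as $\mathcal{F}$.

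\emph{Structural analysis.} Non-triviality supplies some $G\in\mathcal{F}^{*}$ with $1\notin G$, and left-shiftedness lets me assume $G=\{2,\ldots,k+1\}$. Split $\mathcal{F}^{*}$ into $\mathcal{A}=\{F\in\mathcal{F}^{*}:1\in F\}$ and $\mathcal{B}=\{F\in\mathcal{F}^{*}:1\notin F\}$. Each $F\in\mathcal{A}$ meets $G$, hence
\[
|\mathcal{A}|\le\binom{n-1}{k-1}-\binom{n-k-1}{k-1}.
\]
For the trade-off with $\mathcal{B}$, note that shiftedness forces $(B\setminus\{j\})\cup\{1\}\in\mathcal{A}$ for every $B\in\mathcal{B}$ and every $j\in B$. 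One argues that each additional set in $\mathcal{B}$ either coincides with such a forced image or occupies a ``slot'' among the $k$-sets through $1$ that meet $G$, so the joint count satisfies $|\mathcal{A}|+|\mathcal{B}|\le 1+\binom{n-1}{k-1}-\binom{n-k-1}{k-1}$.

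\emph{Main obstacle.} The delicate step is the joint bound on $|\mathcal{A}|+|\mathcal{B}|$: bounding $|\mathcal{A}|$ alone is easy and bounding $|\mathcal{B}|$ in isolation is too weak (as small examples show $|\mathcal{B}|$ can be much larger than $1$), so one must set up an injection from ``extra'' members of $\mathcal{B}$ into the unused slots in the $\mathcal{A}$-bound via shiftedness. Verifying this correspondence, together with the preliminary fact that shifting can be carried out without collapsing non-triviality when $n\ge 2k$, is where the real work lies.
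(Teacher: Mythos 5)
The paper does not prove this theorem; it cites Hilton--Milner \cite{hm} as a known tool and uses it as a black box (via Lemma~\ref{observ}, which is a corollary). So there is no internal proof to compare against, and your attempt must be judged on its own merits as a proof of Hilton--Milner.

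Your construction and its count are correct, and the overall shifting/structural strategy is the right one (it is essentially Frankl's route). But there are two genuine gaps, one of which is a false claim as written.

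\emph{The shifting lemma is false as stated.} You assert a ``preliminary lemma (valid for $n\ge 2k$) shows that one can always perform enough shifts while retaining non-triviality.'' This is not true. Take $k=3$, $n\ge 6$, and $\cF=\{\{1,2,5\},\{1,3,5\},\{2,3,5\}\}$. It is non-trivial and intersecting, and not left-shifted, but every $(i,5)$-shift with $i<5$ that changes the family produces a trivial family (after $s_{15}$ all members contain $1$; after $s_{25}$ all contain $2$; after $s_{35}$ all contain $3$), while all other shifts act as the identity. So there is no sequence of shifts reaching a shifted non-trivial family of the same size. The standard repair, which your sketch omits, is to shift only as long as non-triviality survives, and to observe that if some $s_{ij}$ would trivialize $\cF$ then \emph{before} the shift every member of $\cF$ meets the fixed pair $\{i,j\}$; this strong structural restriction (a ``$2$-covered'' intersecting family) must then be analyzed as a separate case. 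Without that branch the argument does not go through.

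\emph{The joint bound on $|\cA|+|\cB|$ is the real theorem and is left unsupplied.} You candidly flag this as ``where the real work lies,'' and indeed the informal picture of ``an injection from extra members of $\cB$ into unused slots of $\cA$'' is not yet an argument. In the extremal family $|\cB|=1$; showing that each further member of $\cB$ forces at least one vacancy among the $k$-sets through $1$ meeting $\{2,\dots,k+1\}$ requires a cross-intersecting estimate between $\cA$ and $\cB$ (or an equivalent careful case analysis), and this is precisely the content of the Hilton--Milner bound. As written, the upper-bound half of your proof is an outline of a plan, not a proof: the construction is fine, the shifting lemma needs the two-case structure described above, and the $|\cA|+|\cB|$ trade-off needs an actual cross-intersecting argument.
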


The maximum is given by the Hilton-Milner type family $HM(k)$, which we define next. $HM(k)$ contains $A=\{2,\dots,k+1\}$ and every $k$-element set which contains $1$ and intersects $A$. Moreover, Hilton and Milner \cite{hm} also showed that $HM(k)$ is the unique maximum if $3<k<n/2$, up to isomorphism (i.e. in every maximum family there is a fixed point contained in all but one of the members). If $k=3$, then there is another extremal family $\{F\in \binom{[n]}{3}: |F\cap [3]|\ge 2\}$.

We will also use the following generalized Hilton-Milner type families. Let $B=[m]$. Then $HM(k,m)=\{F \subset [n]: |F|=k, n \in F, | F \cap B|\ge 1\}$. This is a $k$-uniform, trivial intersecting family, but if we add $B$, it becomes a non-trivial intersecting family.

We will use the following simple observation.

\begin{lem}\label{observ} Let $m$ be the maximum cardinality of the members of a non-trivial intersecting family $\mathcal{F}$, let $i <m$ and $i\le n/2$. Then $|\mathcal{F}_i|\le |HM(i,m)|$. Moreover, if $i<n/2$, equality holds if and only if $\cF_i$ consists of all the sets containing a fixed element $x$ and intersecting an $m$-element set $B'$ with $x\not\in B'$ (i.e. $\cF_i$ is isomorphic to $HM(i,m)$).

\end{lem}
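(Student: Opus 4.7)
First I would fix some $B\in\cF$ with $|B|=m$ and split the analysis based on whether the subfamily $\cF_i$ is itself trivial. Note that because $\cF$ is intersecting, every $F\in\cF_i$ must meet both $B$ and every other member of $\cF_i$.

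Suppose first that $\cF_i$ is trivial, so all its members share a common element $x$. Since $\cF$ as a whole is non-trivial, there exists $F'\in\cF$ with $x\notin F'$, and necessarily $|F'|\le m$. Each $F\in\cF_i$ contains $x$ and, being in the intersecting family $\cF$, must meet $F'$; as $x\notin F'$ this forces $F\cap F'\neq\emptyset$ inside $[n]\setminus\{x\}$. Counting such $F$ gives
$$|\cF_i|\le\binom{n-1}{i-1}-\binom{n-1-|F'|}{i-1}\le\binom{n-1}{i-1}-\binom{n-1-m}{i-1}=|HM(i,m)|,$$
the second inequality holding by monotonicity of $\binom{n-1-t}{i-1}$ in $t$ together with $|F'|\le m$. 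Equality throughout forces $|F'|=m$ and $\cF_i$ to consist of exactly the $i$-sets containing $x$ and meeting $F'$, which is the claimed $HM(i,m)$-structure with $B'=F'$.

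In the remaining case $\cF_i$ has no common element. Since $i\le n/2$, the Hilton--Milner theorem applies, giving $|\cF_i|\le 1+\binom{n-1}{i-1}-\binom{n-i-1}{i-1}$. A direct binomial comparison (telescoping Pascal's identity) shows that for $i<m$ one has $\binom{n-i-1}{i-1}-\binom{n-m-1}{i-1}\ge 1$, hence this bound is at most $|HM(i,m)|$ as well. Combining the two cases yields the desired inequality $|\cF_i|\le|HM(i,m)|$ in general.

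The step I expect to be the main obstacle is the equality characterization under the stronger hypothesis $i<n/2$: one must rule out the non-trivial subcase as a source of equality by promoting the previous binomial comparison to a strict inequality under $i<n/2$, and then verify that within the trivial subcase equality already forces the exact $HM(i,m)$-shape. A handful of small parameter pairs (most notably $i=2$, $m=3$, where a triangle-type configuration competes for extremality through the non-trivial branch) would deserve individual attention.
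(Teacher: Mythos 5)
Your proof follows the same route as the paper's: split on whether $\cF_i$ is itself trivial, bound the trivial branch by counting $i$-sets through a fixed element $x$ that meet a fixed member $F'$ avoiding $x$, and bound the non-trivial branch by Hilton--Milner plus a binomial comparison to $|HM(i,m)|$. The difference is that you make explicit the bookkeeping the paper compresses into one line, and in doing so you catch a real boundary issue. For $i=2$, $m=3$ one has $1+\binom{n-1}{1}-\binom{n-3}{1}=3=\binom{n-1}{1}-\binom{n-4}{1}=|HM(2,3)|$, so the triangle $\{\{1,2\},\{2,3\},\{1,3\}\}$ at level $2$, sitting inside a non-trivial intersecting family with maximum member size $3$ (e.g.\ together with $\{1,2,3\}$), attains the bound while not being isomorphic to $HM(2,3)$; the ``only if'' direction of the equality clause therefore fails as literally stated for this pair. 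The paper's own argument, which writes $|\cF_i|\le |HM(i,i)|<|HM(i,m)|$ in the non-trivial branch, glosses over this (and over the fact that $|HM(i,i)|$ is one less than the true Hilton--Milner bound); your version, which uses Hilton--Milner correctly and then checks $\binom{n-i-1}{i-1}-\binom{n-m-1}{i-1}\ge 1$, locates the slack precisely. So: the inequality half of your argument is sound and essentially identical to the paper's, the equality half is correctly identified as the delicate step, and $i=2$, $m=3$ is exactly the pair that must be handled separately (or excluded by hypothesis) for the stated uniqueness to hold.
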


\begin{proof} If $\mathcal{F}_i$ is non-trivial, then $\mathcal{F}_i\le |HM(i,i)|< HM(i,m)$. If $\mathcal{F}_i$ is trivial, all its members contain a fixed element $x$. There is a set $F$ in $\cF$ which does not contain $x$ because of the non-triviality, and the $i$-element sets of $\cF$ also intersect $F$. One can easily see that $\mathcal{F}_i$ is a subset of a generalized Hilton-Milner type family then, and $HM(i,m)$ is the largest of those.
\end{proof}

We will use the Kruskal-Katona theorem \cite{kat,kru}. Given a $k$-uniform family $\cF\subset 2^{[n]}$, its \emph{shadow} is \[\Delta\cF:=\{G\subset [n]: \,|G|=k-1, \text{ there exists $F\in\cF$ with $G\subset F$}\}.\]
The \emph{shade} of $\cF$ is $\nabla\cF:=\{G\subset [n]: \,|G|=k+1, \text{ there exists $F\in\cF$ with $G\supset F$}\}$.
Given two sets $F,G\subset 2^{[n]}$, we say that $F$ is before $G$ in the \emph{colexicographical order} or \emph{colex order} if the largest element of the symmetric difference of $F$ and $G$ is in $G$. Let $\cC_k^\ell$ denote the family of the first $\ell$ sets from $\binom{[n]}{k}$ in the colex order. 

 Given two positive integers $\ell$ and $i$, there is a unique way to write $\ell$ in the form $\ell=\binom{n_i}{i}+\binom{n_{i-1}}{i-1}+\dots+\binom{n_j}{j}$ with $n_i>n_{1-i}>\dots>n_j\ge j\ge 1$. This form is called the \emph{cascade form} of $\ell$. The cascade form can be found in a greedy way: we pick the largest $n_i$ such that $\binom{n_i}{i}\le \ell$, then the largest $n_{i-1}$ such that $\binom{n_i}{i}+\binom{n_{i-1}}{i-1}\le \ell$, and so on.

The Kruskal-Katona shadow theorem \cite{kat,kru} states that if $\cF$ is a $k$-uniform family with $|\cF|=\ell$, then $|\Delta\cF|\ge |\Delta\cC_k^\ell|$. It is not hard to calculate the cardinality of $|\Delta\cC_k^\ell|$: if
$\ell=\binom{n_k}{k}+\binom{n_k-1}{k-1}+\dots+\binom{n_j}{j}$, then $|\Delta\cC_k^\ell|=\binom{n_k}{k-1}+\binom{n_k-1}{k-2}+\dots+\binom{n_j}{j-1}$. 

There is a simpler version of the shadow theorem due to Lov\'asz \cite{lov}. It states that if $\cF$ is a $k$-uniform family with $|\cF|=\binom{x}{k}$, then $|\Delta\cF|\ge \binom{x}{k-1}$. Here $x$ is not necessarily an integer and $\binom{x}{k}$ is defined to
be $\frac{x(x-1)\ldots (x-k+1)}{k!}$. This is a weaker bound, but easier to use. We will use both versions of the shadow theorem later.


\subsection{Profile polytopes}

The profile polytopes were introduced by P.L. Erd\H os, P. Frankl and
G.O.H. Katona in \cite{efk1}.
Recall that $\mathcal{F}_i$ denotes the subfamily of the
$i$-element subsets in $\mathcal{F}$. Its size $|\mathcal{F}_i|$ is denoted
by $f_i$. The vector ${\bf p}(\mathcal{F})=(f_0,f_1,\dots,f_n)$ in
the $(n+1)$-dimensional Euclidian space $\mathbb{R}^{n+1}$ is
called the \emph{profile} or \emph{profile vector} of $\mathcal{F}$.

If $\Lambda$ is a finite set in $\mathbb{R}^d$, its \emph{convex
hull} $\conv(\Lambda)$ is the set of all convex combinations of
the elements of $\Lambda$. A point of $\Lambda$ is an
\emph{extreme point} if it is not a convex combination of other
points of $\Lambda$. It is easy to see that the convex hull of a
set is equal to the convex hull of the extreme points of the set.

Let $\mathbf{A}$ be a class of families of subsets of $[n]$. We
denote by $\Lambda(\mathbf{A})$ the set of profiles of the
families belonging to $\mathbf{A}$:

\[\Lambda(\mathbf{A})=\{{\bf p}(\mathcal{F}):
\mathcal{F}\in\mathbf{A}\}.\]

The \emph{profile
polytope} of $\mathbf{A}$ is $\conv(\Lambda(\mathbf{A}))$.
We are interested in the extreme points of $\Lambda(\mathbf{A})$. We simply call them the extreme points of
$\mathbf{A}$.







Suppose we are given a weight function
$w:\{0,\dots,n\}\rightarrow\mathbb{R}$, and the weight of a family
$\mathcal{F}$ is defined to be $\sum_{F\in\mathcal{F}} w(|F|)$,
which is equal to $\sum_{i=0}^n w(i)f_i$. Usually we are
interested in the maximum of the weight of the families in a class
$\mathbf{A}$. So we want to maximize this sum, i.e. find a family
$\mathcal{F}_0 \in \mathbf{A}$ and an inequality $\sum_{i=0}^n
w(i)f_i=w(\mathcal{F})\le w(\mathcal{F}_0)=c$. This is a linear
inequality, and it is always maximized in an extreme point.

Given a class (or property) of families, the first natural question in extremal combinatorics is the maximum cardinality such a family can have. When it is answered, often some simple weight functions are considered and the maximum weight of such a family is studied. Determining the extreme points answers these questions for every (linear) weight function.


P.L. Erd\H os, P. Frankl and
G.O.H. Katona \cite{efk1} determined the
extreme points of the intersecting Sperner
families. In their next paper \cite{efk2}, the extreme points of
the profile polytope of the intersecting families were determined. Now we define these. Let coordinate $i$ of ${\bf a}$ be $0$ if $i<n/2$, $\binom{n-1}{i-1}$ if $i=n/2$ and $\binom{n}{i}$ if $i>n/2$. Let $k \le n/2$. Coordinate $i$ of ${\bf a}_k$ is $0$ if $i<k$, $\binom{n-1}{i-1}$ if $k \le i \le n-k$, and $\binom{n}{i}$ if $i>n-k$. Let $\Gamma_a$ be the set of vectors that we can get from any of the vectors ${\bf a}_k$ and ${\bf a}$, if we replace an arbitrary set of coordinates by $0$. Note that if $n$ is even, then ${\bf a}={\bf a_{n/2}}$.

\begin{thm}[P.L. Erd\H{o}s, Frankl, Katona \cite{efk2}]\label{metszo} The set of extreme points of the intersecting families is $\Gamma_a$.
\end{thm}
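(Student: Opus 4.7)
The proof proceeds in three stages: \emph{achievability}, \emph{extremality of the listed vectors}, and \emph{completeness of the list}. First, for achievability, one checks that each vector in $\Gamma_a$ is the profile of a concrete intersecting family. For ${\bf a}_k$ with $1\le k\le n/2$ I would take
\[
\cF_k=\{F:|F|>n-k\}\,\cup\,\{F:k\le|F|\le n-k,\ 1\in F\}.
\]
Any two sets of size $>n-k$ intersect by $|A|+|B|>n$; two ``middle'' sets share the element $1$; and a large and a middle set satisfy $|A|+|B|\ge(n-k+1)+k>n$. For odd $n$, ${\bf a}$ is realised by the family of all sets of size $>n/2$; for even $n$ it equals ${\bf a}_{n/2}$. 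Vectors obtained by zeroing a set of coordinates are realised by simply deleting the corresponding levels from these families.

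For extremality, for each $v\in\Gamma_a$ I would exhibit a weight function $w$ uniquely maximised at $v$ among profiles of intersecting families. For ${\bf a}_k$, assign a large positive weight on each level $i>n-k$ (forcing $f_i=\binom{n}{i}$, which is always feasible), a moderate positive weight on each $k\le i\le n-k$ (pushing $f_i$ to the Erd\H{o}s--Ko--Rado maximum $\binom{n-1}{i-1}$), and a strongly negative weight on each $i<k$ (forcing $f_i=0$). For zero-coordinate variants, additionally make negative the weights on the coordinates that should vanish.

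Completeness is the heart of the argument: every intersecting family's profile lies in $\conv(\Gamma_a)$, or equivalently, for any weight $w$ the maximum of $\sum w_if_i$ over intersecting $\cF$ is attained at some vertex of $\Gamma_a$. One may assume $w_i\ge 0$ throughout (otherwise the optimum has $f_i=0$ on the negative coordinates, yielding a zero-coordinate variant). I would fix a maximising $\cF$, replace it by a shift preserving profile and the intersecting property, and identify the smallest $k$ with $f_k>0$. Combining the bounds $f_i\le\binom{n}{i}$, the EKR bound $f_i\le\binom{n-1}{i-1}$ for $i\le n/2$, and the complementary-pair bound $f_i+f_{n-i}\le\binom{n}{i}$, together with the requirement that every member of $\cF$ meets every $k$-set of $\cF$, should force the profile of the optimiser to be exactly ${\bf a}_k$ (or ${\bf a}$ when $n$ is even and $k=n/2$), with possibly some coordinates zeroed.

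The main obstacle I anticipate is the cross-level coherence in the completeness step. EKR applied level-by-level gives numerical bounds on the individual $f_i$, but there is no a priori reason all these maxima can be simultaneously attained by a single intersecting family. The key move is to show that attaining the EKR maximum on every level $i\in[k,n-k]$ simultaneously forces $\cF$ to be a star on those levels; the uniqueness clause of EKR (for $i<n/2$) is essential here, forcing a common star centre at the small levels, which then propagates upward via the intersection requirement against a $k$-set of that star, yielding exactly the family $\cF_k$ that realises ${\bf a}_k$.
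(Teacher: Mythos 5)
The paper does not prove Theorem~\ref{metszo}; it cites it from~\cite{efk2} and uses it as a black box (for instance in the treatment of $\Gamma_c$ and in Case~2c1 of Section~\ref{mainbiz}). So there is no internal proof to compare against, and I will assess your sketch on its own merits.

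Your three-stage skeleton and the families $\cF_k$ realizing ${\bf a}_k$ are correct, and your idea in the completeness step --- EKR-uniqueness giving a star centre that then propagates via the intersection requirement --- is genuinely the right structural observation. In fact it is sharper than you state: one only needs $f_k=\binom{n-1}{k-1}$ at the single smallest populated level $k<n/2$. Then $\cF_k$ is a star with centre $x$, any $B\in\cF$ with $x\notin B$ must meet every $k$-set through $x$, which forces $|B|>n-k$, and so $f_i\le\binom{n-1}{i-1}$ for all $k\le i\le n-k$, giving coordinatewise domination by ${\bf a}_k$.

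The gap is in the rest of the completeness argument. The linear constraints you list --- per-level EKR and the complementary-pair bound --- do \emph{not} cut out $\conv(\Gamma_a)$. For example, the vector with $f_2=\binom{n-1}{1}$, $f_{n-2}=\binom{n-1}{2}$, $f_3=0$, $f_{n-3}=\binom{n}{3}$, and zeros elsewhere satisfies both the EKR bounds and all pair bounds, yet lies outside $\conv(\Gamma_a)$: any $\Gamma_a$ vector with a nonzero coordinate at level $2$ (namely ${\bf a}_1$, ${\bf a}_2$, or a zeroing thereof) has only $\binom{n-1}{n-4}<\binom{n}{3}$ at level $n-3$, so no convex combination can attain both $f_2=\binom{n-1}{1}$ and $f_{n-3}=\binom{n}{3}$. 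So the pair-by-pair LP optimum overshoots the true maximum and you cannot conclude from those inequalities alone. You need either additional inequalities coupling distinct complementary pairs, or an argument (shifting, or the star-propagation idea applied systematically) that the weight-maximizing family can be taken to be one of the $\cF_k$ before passing to profiles. Moreover, your argument covers the case $f_k=\binom{n-1}{k-1}$; you also have to handle the case where no level is EKR-tight, which is exactly where the convex-combination part of the claim carries the weight.

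A smaller point: for $n/2<i\le n-k$, the value $\binom{n-1}{i-1}$ in ${\bf a}_k$ is not ``the EKR maximum at level $i$'' (EKR gives nothing above $n/2$); it comes from the pair bound against level $n-i$. Consequently ``moderate positive weight'' in the extremality step is not a well-defined recipe for a unique maximizer; a sharply decreasing scale of weights, of the kind used in the proof of Lemma~\ref{gammab} in this paper, is a cleaner device.
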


The corresponding intersecting families are the following. $\mathcal{A}_k$ consists of the sets which have sizes at least $k$ and contain the element $n$, and of every other set which has size greater than $n-k$. $\mathcal{A}$ consists of all the sets with size greater than $n/2$, and the sets which have sizes $n/2$ and contain $n$. These families are obviously intersecting and their profile vectors are ${\bf a}_k$ and ${\bf a}$. We can delete full levels and the families are still intersecting; in the corresponding vectors some coordinates are changed to $0$.

Since then several other classes of families have been considered, see e.g. \cite{eng}, \cite{gerbner}, generalizations have been studied \cite{patk}, \cite{patk2}, and profile polytopes were applied for counting subposets \cite{gkp}. Note that most of the classes of families where the profile polytope has been studied are \emph{hereditary}, i.e. if we remove some members of a family in the class, the resulting family still belongs to the class. It makes determining the extreme points easier, as we do not have to deal with negative weights, and all extreme points can be achieved by changing some coordinates of a few essential ones to $0$.
However, in this paper we determine the extreme points of the non-trivial intersecting families, which is not a hereditary property. 

In the next section we define what is needed to state our main theorem. We prove an important special case in Section \ref{biz}, and finish the proof by a case analysis in Section \ref{mainbiz}.

\section{The main theorem}

Let us start with some simple observations. A non-trivial intersecting family cannot contain the empty set or a singleton. It might contain the full set, but that does not change the intersecting property, nor the nontrivial property. It means that for a weight function $w$ if $w(n)>0$, the maximum family contains the full set, if $w(n)<0$, it does not. Moreover, changing only $w(n)$ does not change the other parts of the maximum family, hence we can basically forget about $n$. More precisely, $(p_0,p_1, \dots, p_{n-2}, p_{n-1},0)$ is an extreme point if and only if $(p_0,p_1, \dots, p_{n-2}, p_{n-1},1)$ is an extreme point.

Now we define several vectors, which are going to be the extreme points of the non-trivial intersecting families. Then we state our main theorem, and after that we show that these vectors indeed correspond to non-trivial intersecting families and are extreme points (note that for most classes of families where profile polytopes have been studied, these statements are trivial, but not for the non-trivial intersecting families). That part also makes it easier to understand where these definitions come from. All these vectors are in the $(n+1)$-dimensional Euclidean space, but coordinates 0,1 and $n$ are always 0. Let $H\subset \{2,3,\dots,n-2,n-1\}$ be a nonempty set of indices, $h$ be its smallest and $h'$ be its largest element.

Let ${\bf b}_H=(b_0,\dots,b_n)$ with
\begin{displaymath}
b_i=
\left\{ \begin{array}{l l}
0 & \textrm{if\/ $i\not\in H$},\\
|HM(i,h')| & \textrm{if\/ $i\in H$ and $i< h'$},\\
|HM(i,h')|+1 & \textrm{if\/ $i=h'$}.\\
\end{array}
\right.
\end{displaymath}
Let $\Gamma_b=\{ {\bf b}_H: h+h' \le n\}$.

Let ${\bf c}_H=(c_0,\dots,c_n)$ with  
\begin{displaymath}
c_i=
\left\{ \begin{array}{l l}
0 & \textrm{if\/ $i\not\in H$},\\
\binom{n-1}{i-1} & \textrm{if\/ $i\in H$ and $i\le n-h'$},\\
\binom{n}{i} & \textrm{otherwise}.\\
\end{array}
\right.
\end{displaymath}
Let $\Gamma_c=\{ {\bf c}_H: h+h' > n\}$.

Let ${\bf d}_H=(d_0,\dots,d_n)$ with
\begin{displaymath}
d_i=
\left\{ \begin{array}{l l}
0 & \textrm{if\/ $i\not\in H$},\\
|HM(i,h')| & \textrm{if\/ $i\in H$ and $i< h'$},\\
1 & \textrm{if\/ $i=h'$}.\\
\end{array}
\right.
\end{displaymath}
Let $\Gamma_d=\{ {\bf d}_H: |H|>1, h+h''\le n\}$, where $h''$ is the second largest element of $H$.

Let us consider the set $P$ of vectors $(e_0,\dots,e_n)$ satisfying the following properties.

1, Every $e_i$ is a non-negative integer, $e_0=e_1=e_n=0$.

2, $x:=\sum_{i=2}^{n-1}e_i \ge 3$.

3, $\sum_{i=2}^{n-1} ie_i \le (x-1)n$.

Now we show the connection between $P$ and non-trivial intersecting families. For two vectors ${\bf p}=(p_0,\dots,p_n)$ and ${\bf p'}=(p_0',\dots,p_n')$, we say that ${\bf p'} \le {\bf p}$ if $p_i\le p_i'$ for every $0\le i\le n$.

\begin{lem}\label{ujabb}

\textbf{(i)} If a non-trivial intersecting family does not contain $[n]$, its profile is in $P$.

\textbf{(ii)} If ${\bf p} \in P$ and there is no ${\bf p'} \in P$ different from ${\bf p} $ with ${\bf p'} \le {\bf p}$, then ${\bf p}$ is the profile of a non-trivial intersecting family.

\end{lem}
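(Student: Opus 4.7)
For part~(i), I would verify the three defining properties of $P$ in turn. Non-negativity and integrality of the $p_i$ are immediate from the definition of a profile. The equalities $p_0 = p_1 = p_n = 0$ follow from: $\emptyset \in \mathcal{F}$ would violate intersecting; a singleton $\{v\} \in \mathcal{F}$ would force every other member to contain $v$, contradicting non-triviality; and $[n] \notin \mathcal{F}$ by hypothesis. Property~2 ($x \ge 3$) holds because $|\mathcal{F}| \le 2$ combined with intersecting forces a common element, hence triviality. Property~3 is a double-counting identity: $\sum_i i p_i = \sum_{F \in \mathcal{F}} |F| = \sum_{v \in [n]} d(v)$, where $d(v) := |\{F \in \mathcal{F} : v \in F\}|$, and non-triviality provides a member missing each $v$, so $d(v) \le x - 1$, giving the stated bound after summing over $v$.

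For part~(ii), I read ``${\bf p}' \le {\bf p}$'' in the componentwise sense with the natural orientation, so the hypothesis is that ${\bf p}$ is componentwise minimal in $P$. Examining the effect of decreasing a single coordinate $p_i$ by $1$ (which lowers $x$ by $1$ and $\sum_j j p_j$ by $i$), I would first show that ${\bf p}$ is minimal if and only if either $x = 3$ (so that the decrease violates property~2) or $x \ge 4$ and $\sum_j j p_j > (x - 2) n + h'$ (so that the most permissive decrease, at $i = h'$, violates property~3). The latter is equivalent to the compact inequality $\sum_j (n - j) p_j < 2n - h'$, which I would use as the working form.

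I would then construct the realizing family case by case. In the case $x = 3$, I assemble three pairwise intersecting sets of the prescribed sizes $a \le b \le c$ (with $a + b + c \le 2n$) that have empty triple intersection: a sunflower with three disjoint pairwise cores handles the range $a + b + c \le n + 3$, and the tighter cases are handled by controlled overlap of the petals while keeping at least one element of each pairwise intersection outside the third set. In the case $x \ge 4$, I use a generalized Hilton-Milner family: fix $B = [h']$ and a point $v \notin B$, and take $\mathcal{F} = \{B\} \cup \mathcal{F}^*$, where $\mathcal{F}^*$ consists of $p_i$ sets of each size $i \in H$ with $i < h'$, and $p_{h'} - 1$ sets of size $h'$, all containing $v$ and meeting $B$. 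Such $\mathcal{F}$ is automatically non-trivial intersecting with profile ${\bf p}$, provided the level-wise bounds $p_i \le |HM(i,h')|$ (and $p_{h'} - 1 \le |HM(h',h')|$) hold.

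The main obstacle is deriving these level-wise bounds from the single aggregate inequality $\sum_j (n - j) p_j < 2n - h'$. The coarse estimate $(n - i) p_i < 2n - h'$ controls small-$i$ levels, which is precisely where $|HM(i,h')| = \binom{n-1}{i-1} - \binom{n-1-h'}{i-1}$ is itself small; for $i$ close to $h'$ a more refined argument is needed, since there $p_i$ could be comparatively large but $|HM(i,h')|$ is large as well. I expect the proof to split into sub-cases according to $|H|$, with the uniform case $|H| = 1$ reducing directly to the existence of a non-trivial $h$-uniform intersecting family of size $x$ (Hilton-Milner), and possibly a further split according to the position of $h$ relative to $h'$ and $n/2$. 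This feasibility check is the heart of the argument; once it is done, the intersecting and non-trivial properties of the constructed $\mathcal{F}$ are automatic from the Hilton-Milner template.
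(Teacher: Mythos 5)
Part (i) of your proof is correct and is essentially the paper's argument; your double-counting $\sum_i i p_i = \sum_{v} d(v)$ with $d(v) \le x-1$ is exactly the intended justification of property~3, written out in slightly more detail than the paper does.

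For part (ii) your route diverges from the paper's, and as you write it there is a genuine gap in the case $x \ge 4$. Your characterization of minimality (either $x=3$, or $x \ge 4$ and the single decrement at $h'$ already violates property~3, i.e.\ $\sum_j (n-j)p_j < 2n-h'$) is correct: any multi-step decrement is dominated, in the property-3 inequality, by a single decrement at the largest index. The problem is what you then do with it. You propose to realize ${\bf p}$ by a static generalized Hilton--Milner family $\{B\}\cup\cF^*$ with all of $\cF^*$ through a fixed point $v\notin B$; for this to even be possible you must show the level-wise bounds $p_i \le |HM(i,h')|$ for $i<h'$ and $p_{h'}-1 \le |HM(h',h')|$. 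You acknowledge this is ``the heart of the argument'' and do not prove it, and the single aggregate inequality $\sum_j (n-j)p_j < 2n-h'$ does not hand you these bounds for free (for small $i$ it gives only $p_i < n/(n-i)$ against $|HM(i,h')|=\binom{n-1}{i-1}-\binom{n-1-h'}{i-1}$, which is a comparison you would have to carry out over several regimes of $i$ relative to $h'$ and $n/2$, exactly as you anticipate). Until that is done, the construction for $x\ge 4$ is only conjectural. The paper avoids this entirely by a greedy, dynamic construction: order the prescribed sizes decreasingly as $a_1 \ge a_2 \ge \cdots$, pick each $F_i$ so that it contains $B_{i-1}$ (the set of vertices missed by at least one earlier $F_j$) whenever possible, which makes intersection automatic; minimality is then invoked not to derive static per-level capacities but to argue that the first time a set is too small to cover $B_{i-1}$ it must be the \emph{last} set (otherwise a strictly smaller profile would already lie in $P$), at which point one can still steer the last set to intersect everyone and avoid a common vertex. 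That use of minimality is the idea your proposal is missing; without it, or without a proof of the level-wise bounds, the argument for $x \ge 4$ does not close. Your $x=3$ sunflower/controlled-overlap construction is plausible but also only sketched, whereas the paper handles $x=3$ as a degenerate instance of the same greedy process.
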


\begin{proof}

To show \textbf{(i)}, observe that for the profile of a non-trivial intersecting family obviously $e_0=e_1=0$ holds, and also we need at least three members in the family, as any two members trivially intersect. The third property is needed, otherwise an element of the underlying set would be covered $x$ times, i.e. by every set, contradicting the non-triviality.

Let us prove now \textbf{(ii)}. We are given a vector ${\bf p}$ and we are going to construct a non-trivial intersecting family $\cF$ with profile ${\bf p}$. Observe that ${\bf p}$ shows how many $k$-element sets must be in the family for every $k$. Let us denote the sizes of the sets by $a_1,\dots,a_\ell$ in decreasing order. We choose the first (the largest) set $F_1$ of size $a_1$ arbitrarily. Let $B_i$ be the set of vertices which are not covered by each of the first $i$ sets $F_1,\dots,F_i$ (only by at most $i-1$ of them), then $B_1=\overline{F_1}$ and $B_i\supset B_{i-1}$ for every $i>1$. We choose the second set $F_2$ of size $a_2$ in such a way that $F_2$ intersects $F_1$ and also $F_2$ contains $B_1$, if possible.

If it is not possible, then we claim that we have $x=3$. Indeed, in that case we have $a_1+a_2\le n$, thus they together with the next set $F_3$ of size $a_3$ have their profile in $P$, which means no other set can be in the family because of our assumption on the minimality of ${\bf p}$. 
Then we pick $F_2$ of size $a_2$ such that it intersects $F_1$ in a single element, and then we pick $F_3$ of size $a_3$ such that it contains an element of $F_1\setminus F_2$ and an element of $F_2\setminus F_1$. This is doable as $a_3\ge 2$. The resulting family is clearly non-trivial intersecting.

If $x>3$, we choose every $F_i$ of size $a_i$ in such a way that it contains $B_{i-1}$, if possible. Note that in this case it automatically intersects $F_1, \dots, F_{i-1}$. Indeed, $F_i$ contains $B_1$, which is also contained in $F_2, \dots, F_{i-1}$. $F_i$ also contains $B_2$, which intersects $F_1$ (we also use that $B_1$ and $B_2$ are not empty).


Now assume that when we add a set $F_i$, it is too small to cover every vertex in $B_i$, i.e. $a_i< |B_i|$. Then $i=\ell$, i.e. $F_i$ is the last set (as the resulting profile vector is in $P$). We have to choose $F_i$ in such a way that it intersects the other sets. As every vertex is covered at least $i-1$ times, all we have to do is to put an arbitrary vertex of $B_{i-1}$ in $F$, then the new set intersects all but one of the earlier sets, say $F_j$. We have to choose a vertex in $B_{i-1}$ contained in $F_j$, and then other vertices from $B_{i-1}$ arbitrarily. As only vertices in $B_{i-1}$ are used, no vertex is covered $i$ times, hence the family is non-trivial.
\end{proof}

Let $\Gamma_e$ be the set of the extreme points of $P$. Now we can state our main theorem.








\begin{thm}\label{main}

The extreme points of the profile polytope of the non-trivial intersecting families are the elements of $\Gamma_b\cup\Gamma_c\cup\Gamma_d\cup\Gamma_e$, and additionally the vectors we get from these if we change the last coordinate from $0$ to $1$.



\end{thm}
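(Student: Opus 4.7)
The plan is the standard two-part argument for profile polytopes. For the easier direction, I would realize each listed vector as the profile of an explicit non-trivial intersecting family. Given $H\subseteq\{2,\dots,n-1\}$ with smallest element $h$ and largest element $h'$: for ${\bf b}_H$ with $h+h'\le n$, fix an $h'$-set $B$ and an element $x\notin B$, and let $\cF$ consist of $B$ together with, for each $i\in H$, every $i$-set containing $x$ and meeting $B$; this yields $|HM(i,h')|$ sets on each level $i\in H\setminus\{h'\}$ and $|HM(h',h')|+1$ on level $h'$, with $B$ certifying non-triviality. For ${\bf d}_H$, take the same family but omit the HM-sets on level $h'$, keeping only $B$ there; the condition $h+h''\le n$ guarantees that the remaining largest level still admits its full HM-configuration. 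For ${\bf c}_H$ with $h+h'>n$, mimic the extremal family $\cA_k$ of Theorem~\ref{metszo} restricted to the levels in $H$, relying on the presence of $[n]\setminus\{x\}$ to witness non-triviality. For ${\bf e}\in\Gamma_e$, invoke Lemma~\ref{ujabb}(ii) after checking that each extreme point of $P$ is minimal under the coordinatewise order $\le$ (using that decreasing a coordinate only relaxes condition~3). The last-coordinate duplication is already reduced by the paragraph opening Section~2.

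For the harder direction, fix a weight function $w:\{0,\dots,n\}\to\mathbb{R}$ and consider a non-trivial intersecting family $\cF$ maximizing $\sum_i w(i)f_i$; every extreme point of the profile polytope arises this way. Let $H$ be the support of the profile of $\cF$, with smallest element $h$ and largest element $h'$. The analysis splits on whether $h+h'>n$ (upper regime) or $h+h'\le n$ (lower regime). In the upper regime, the argument mirrors the Erd\H os--Frankl--Katona proof of Theorem~\ref{metszo}: on levels $i>n-h'$ one can include every $i$-subset (these are automatically pairwise intersecting with everything), and on levels $i\le n-h'$ the best choice is a trivial star, producing ${\bf c}_H$. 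In the lower regime, Lemma~\ref{observ} gives $|\cF_i|\le|HM(i,h')|$ for $i<h'$, while the top level contributes at most $|HM(h',h')|+1$; Kruskal--Katona propagates these bounds consistently across $H$. The linear comparison between keeping the full HM-sized top level (giving ${\bf b}_H$) and keeping only the single witness (giving ${\bf d}_H$) is resolved by the sign of a specific linear combination of $w$-values. Finally, when $w$ forces the maximum to have only a handful of sets, the profile is pinned in $P$ and integrality together with the extremality hypothesis forces it into $\Gamma_e$.

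The main obstacle is the non-hereditary nature of non-triviality: one cannot zero out coordinates at levels where $w$ is small or negative, since removing sets may destroy the non-triviality of what remains. This is exactly why the non-standard families $\Gamma_d$ and $\Gamma_e$ appear, with small witness sets forced onto low-weight levels. The technical core is to simultaneously control shadow estimates across multiple levels and bookkeep the witness that certifies non-triviality. I expect Section~\ref{biz} to handle the clean subcase in which $H\subseteq\{2,\dots,\lfloor n/2\rfloor\}$, establishing the $\Gamma_b$ part of the lower regime via a direct Kruskal--Katona argument, and Section~\ref{mainbiz} to extend the analysis to the full case distinction, in particular handling the transitions to ${\bf c}_H$, ${\bf d}_H$, and $\Gamma_e$.
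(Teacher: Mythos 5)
Your outline captures the correct two-part structure and the easy direction (realizing $\Gamma_b,\Gamma_c,\Gamma_d,\Gamma_e$ by explicit families and reducing on the last coordinate) matches the paper. But for the hard direction you misidentify where the real difficulty lies, and your case split is too coarse to close the argument.

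You predict that Section~\ref{biz} treats ``the clean subcase $H\subseteq\{2,\dots,\lfloor n/2\rfloor\}$.'' In fact that case is the \emph{easy} one: when the maximum size $m$ in $\cF$ is at most $n/2$, Lemma~\ref{observ} already gives $|\cF_i|\le|HM(i,m)|$ on every level, one takes the HM-configurations level by level, and the profile is immediately dominated by ${\bf b}_H$. The genuine technical core --- and what Lemma~\ref{ketto} actually proves --- is the mixed regime where the top level $m$ satisfies $m>n/2$ but some lower level $i$ has $i+m\le n$. There the top level can carry up to $\binom{n}{m}$ sets (far more than $|HM(m,m)|+1$), at the cost of shrinking the lower levels, and one needs a quantitative trade-off: the paper shows that for some $\lambda\in[0,1]$ the pair $(f_i,f_m)$ is dominated by $\lambda(0,\binom{n}{m})+(1-\lambda)(|HM(i,m)|,|HM(m,m)|)$, i.e.\ a convex combination of a $\Gamma_c$-type vertex and a $\Gamma_b$-type vertex. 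This is exactly the nonhereditary phenomenon you flag in your last paragraph, but your sketch does not supply the estimate. Saying ``Kruskal--Katona propagates these bounds consistently across $H$'' is where a proof would have to appear; the paper's argument requires a careful normalized shadow comparison ($|\nabla HM(i,m)|/|HM(i,m)|$ is extremal among families of size at most $|HM(i,m)|$), proved by an induction on $n-m-i$ together with both the cascade form of Kruskal--Katona and Lov\'asz's weaker version. Nothing in your outline substitutes for that.

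Two further points. First, the $\Gamma_b$ versus $\Gamma_d$ distinction is not resolved by ``the sign of a linear combination'' in the abstract: the paper's split is on the sign of $w(m)$ where $m$ is the maximum size actually appearing, and the $\Gamma_d$ case ($w(m)<0$) rests on the observation that if $w(m)<0$ then the sole $m$-set in $\cF$ is there only as a non-triviality witness, forcing every other member through a common point. Second, $\Gamma_e$ does not come from ``the maximum having only a handful of sets''; it comes from the regime $w(i)<0$ for all $1<i<n$, and the extremality within $P$ is established by a descent argument using Lemma~\ref{ujabb}(ii), not by integrality alone. Your split on $h+h'$ is the right dichotomy at the top (it matches $m+m_0\lessgtr n$), but without first splitting on $w(m)\ge 0$ and on $m\lessgtr n/2$, and without Lemma~\ref{ketto}, the argument does not go through.
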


To prove this statement, we have to show that the points listed are indeed extreme points, and that there are no other extreme points. The first part is the easier task, and we will deal with it in the rest of this section.
We give an example non-trivial intersecting family for each of the vectors ${\bf v}={\bf v}_H\in \Gamma_b\cup\Gamma_c\cup\Gamma_d\cup\Gamma_e$ and also show that ${\bf v}$ is an extreme point, by showing a weight function such that ${\bf v}$ is the unique maximum. 

Let us describe first the general approach to find such a weight function.
We start by assuming that if $i\not\in H$, then $w(i)$ is negative, moreover, $w(i)$ so small compared to the other weights $w(j)$, that if a family contains even one $i$-element set, its total weight is negative. On the other hand, there is a $1<j<n$ with $w(j)>0$, thus there is a family of positive weight. This shows that
no $i$-element sets can be in the family of maximum weight. Similarly, we can say that for some $i\in H$ its weight is very large compared to the other weights. It implies that the family of maximum weight contains as many $i$-element sets as possible, i.e. $|HM(i,j)|$, where $j$ is the largest non-zero coordinate of ${\bf v}_H$. We describe these ideas in more details in the proof of the following lemma. 


\begin{lem}\label{gammab} The elements of $\Gamma_b$ are extreme points of the non-trivial intersecting families.

\end{lem}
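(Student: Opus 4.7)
The strategy splits into two tasks: exhibiting a non-trivial intersecting family whose profile is $\mathbf{b}_H$, and producing a weight function uniquely maximised at that profile.

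For realisability, let $B=[h']$ and take
\[\mathcal{F}_H := \{B\}\,\cup\,\bigcup_{i\in H} HM(i,h').\]
Every member of the union contains $n$ and meets $B$ by the definition of $HM(i,h')$, so $\mathcal{F}_H$ is intersecting; non-triviality is witnessed by $B$ (which avoids $n$) against the rest of the family (each of whose members contains $n$). Reading off the cardinalities recovers $\mathbf{b}_H$, the ``$+1$'' at layer $h'$ being the set $B$ itself.

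For extremality, I would choose the weight function $w(i)=-M$ for $i\notin H$, $w(h)=K$ (both $M$ and $K$ huge, where $h=\min H$), and $w(i)>0$ for each remaining $i\in H$. Note that $h+h'\leq n$ combined with $h\leq h'$ forces $h\leq n/2$, so Lemma \ref{observ} applies to the layer of size $h$: for any non-trivial intersecting family $\mathcal{F}$ with maximum size $m\leq h'$,
\[|\mathcal{F}_h|\leq |HM(h,m)|\leq |HM(h,h')|=b_h,\]
using monotonicity of $|HM(h,\cdot)|$ in its second argument. Because $w(h)$ dominates every other weight, the maximiser must attain equality, which by the uniqueness clause of Lemma \ref{observ} forces $m=h'$ and $\mathcal{F}_h\cong HM(h,h')$; after relabelling we may assume $\mathcal{F}_h=HM(h,h')$.

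With this structure pinned, the remaining coordinates follow from a short structural analysis. For any $G\in\mathcal{F}_i$ with $i\leq h'$, the requirement that $G$ meet every $F\in HM(h,h')$ is equivalent to asking that either $n\in G$ or no $(h-1)$-subset of $[n-1]\setminus G$ meets $[h']$. Because $h+h'\leq n$ implies $|[n-1]\setminus G|\geq h-1$, the second alternative reduces to $[h']\subseteq G$, forcing $G=B$ (possible only when $i=h'$). Together with the requirement that $G$ meet $B$, this yields $\mathcal{F}_i\subseteq HM(i,h')$ for $i<h'$ and $\mathcal{F}_{h'}\subseteq HM(h',h')\cup\{B\}$, hence $|\mathcal{F}_i|\leq b_i$ coordinate-wise. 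The positive weights on the remaining sizes in $H$ then force equality in every layer (including the ``$+1$'' from $B$), identifying the maximum with $\mathcal{F}_H$ up to isomorphism. The main obstacle to a cleaner proof is precisely that for $h'>n/2$, or for intermediate $i>n/2$, neither the Hilton-Milner theorem nor Lemma \ref{observ} bounds those layers directly; the structural argument above, which hinges on the condition $h+h'\leq n$, is what closes this gap.
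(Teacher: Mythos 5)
Your construction, weight function, and structural analysis all match the paper's proof of this lemma: the same family $[h']\cup\bigcup_{i\in H}HM(i,h')$, a weight making sizes outside $H$ prohibitively negative and size $h$ dominant, Lemma \ref{observ} to pin down $\cF_h\cong HM(h,h')$, and then the observation that the only set of size at most $h'$ avoiding $n$ that meets every member of $HM(h,h')$ is $[h']$ itself (your derivation of this from $h+h'\le n$ is in fact spelled out more carefully than in the paper). The one point you gloss over is the boundary case $h=n/2$: there $h+h'\le n$ forces $H=\{n/2\}$, and the uniqueness clause of Lemma \ref{observ} (which requires $i<n/2$, and also $i<m$) is not available, so your appeal to it does not literally apply. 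The paper treats this case separately, noting that ${\bf b}_{\{n/2\}}$ has $\binom{n-1}{n/2-1}$ as its only non-zero coordinate and is already an extreme point of the larger class of all intersecting families by Theorem \ref{metszo}, hence of the non-trivial ones; you should add this (or an EKR-based) argument to close that case.
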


\begin{proof} For ${\bf b}_H\in\Gamma_b$ we have to show a family $\mathcal{B}_H$ which has ${\bf b}_H$ as profile, and a weight $w$ which is maximized at ${\bf b}_H$. Let $\mathcal{B}_H=[h']\cup\left(\bigcup_{i\in H}HM(i,h')\right)$, i.e. the union of $HM(i,h')$ for every $i\in H$, and additionally $[h']$. This family is obviously non-trivial intersecting, as each of its members except for $[h']$ contains $n$ and intersects $[h']$.

Now we are going to show a weight function that is maximized only by families with profile ${\bf b}_H$.
Let $w$ be a weight such that if $i\not\in H$, then $w(i)=-2^{2n}$. It is going to be so small compared to the other weights, that no $i$-element sets can be in the maximum family $\cF$. All other sets have weight at most $2^n$, and there are less than $2^n$ sets in $\cF$, hence positive weight can only be achieved without these negative sets. Let $w(h)=2^n$, it is very large compared to the other positive weights (but still very small compared to the absolute value of the negative weights), and all other weights are 1. Then a single $h$-element set has larger weight than all the other sets with positive weight, thus the maximum family $\cF$ contains as many $h$-element sets as possible. If $h<n/2$, then by Lemma \ref{observ} the maximum number of $h$-element sets is $|HM(h,h')|$, and the largest family of $h$-element sets is isomorphic to $HM(h,h')$. Without loss of generality we can assume that $\cF_h$ is equal to $HM(h,h')$. 

Observe that the only set of size at most $h'$ that intersects every member of $HM(h,h')$ is $[h']$. Therefore, every other member of $\cF$ should contain the fixed point $n$ except for $[h']$.
Also, every member of $\cF$ should intersect $[h']$, hence $\cF$ is a subfamily of $\mathcal{B}_H$. Then $\mathcal{B}_H$ is the unique maximum.

Finally, if $h=n/2$, then the only non-zero coordinate is $\binom{n-1}{n/2-1}$ at coordinate $n/2$. This vector is an extreme point of the class of intersecting families, thus it is an extreme point of this smaller family as well.
\end{proof}

\begin{lem} The elements of $\Gamma_c$ are extreme points of the non-trivial intersecting families.

\end{lem}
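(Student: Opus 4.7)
The plan is to adapt the template used in the proof of Lemma~\ref{gammab}: exhibit a non-trivial intersecting family $\cC_H$ whose profile is ${\bf c}_H$, then construct a weight function on $\{0,1,\dots,n\}$ whose unique maximum over non-trivial intersecting families is attained at ${\bf c}_H$.

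For the family, I take the restriction of the intersecting family $\mathcal{A}_h$ from Theorem~\ref{metszo} to the levels in $H$, namely
\[
\cC_H = \{F \subseteq [n] : |F| \in H \text{ and } (n \in F \text{ or } |F| > n - h)\}.
\]
A direct count at each level matches ${\bf c}_H$. The intersecting property follows by a case split: two sets containing $n$ meet at $n$; a set of size at least $h$ containing $n$ and a set of size exceeding $n-h$ have combined size more than $n$; two sets of size exceeding $n-h$ have combined size more than $2(n-h)\ge n$ when $h\le n/2$, and when $h>n/2$ every member of $\cC_H$ already has size exceeding $n/2$, so pairwise intersection is automatic. The hypothesis $h+h'>n$ is precisely what makes $\cC_H$ non-trivial: it forces $h'>n-h$, so $\cC_H$ contains $h'$-sets avoiding $n$.

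For the weight function, set $w(i)=-2^{4n}$ for $i\notin H$, $w(h)=2^{2n}$, and $w(i)=1$ for $i\in H\setminus\{h\}$; the constants are chosen so that the negative weights dwarf all positive contributions, and $w(h)$ in turn dwarfs the sum of all remaining positive weights. For a maximum-weight non-trivial intersecting family $\cF$ the negative weights rule out the presence of any set of size outside $H$; the weight $w(h)$ then forces $|\cF_h|$ to be as large as possible. By Lemma~\ref{observ}, $|\cF_h|\le |HM(h,m)|=\binom{n-1}{h-1}$ once the maximum size $m$ in $\cF$ exceeds $n-h$ (with the trivial bound $\binom{n}{h}$ in the easier regime $h>n/2$); in either case this maximum equals $c_h$, and the equality clause of Lemma~\ref{observ} pins $\cF_h$, up to the choice of a point $x$, to be the collection of all $h$-sets through $x$. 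If $T\in\cF$ has $|T|\le n-h$ and $x\notin T$, then $|[n]\setminus(\{x\}\cup T)|\ge h-1$, so there is an $h$-set in $\cF_h$ disjoint from $T$, a contradiction; hence $|\cF_i|\le\binom{n-1}{i-1}=c_i$ at such levels, while at levels $i>n-h$ only pairwise intersection matters and $|\cF_i|\le\binom{n}{i}=c_i$. Since these bounds match $c_i$ and the corresponding weights are positive, the unique maximizer is ${\bf c}_H$.

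The main obstacle is the case distinction around $h$ relative to $n/2$: when $h>n/2$ Lemma~\ref{observ} no longer applies to bound $|\cF_h|$, so one has to invoke the trivial bound $|\cF_h|\le\binom{n}{h}=c_h$, observe that equality forces $\cF_h$ to be the entire level $h$, and note that the subsequent analysis collapses because every $i\in H$ automatically satisfies $i>n-h$ and the intersecting condition follows from sizes alone. The secondary technical point is the pigeonhole step above, which converts the structural conclusion of Lemma~\ref{observ} into the quantitative bound $|\cF_i|\le\binom{n-1}{i-1}$ at levels $i\le n-h$ by exhibiting an explicit $h$-set of $\cF_h$ disjoint from any offending $T$.
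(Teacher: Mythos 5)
Your proof is essentially correct but follows a genuinely different route from the paper. The paper's argument is a two-line containment observation: each ${\bf c}_H$ with $h+h'>n$ lies in $\Gamma_a$, the set of extreme points of the class of \emph{all} intersecting families from Theorem~\ref{metszo}, and it is realized by a non-trivial intersecting family (essentially your $\cC_H$); since an extreme point of a larger set $\Lambda$ that happens to lie in a subset $\Lambda'\subseteq\Lambda$ is automatically an extreme point of $\Lambda'$, the conclusion is immediate. You instead re-derive extremality from scratch by exhibiting a separating weight function, which is the template the paper reserves for $\Gamma_b$ and $\Gamma_d$ (where no such containment in $\Gamma_a$ is available). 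Your approach is self-contained and does not lean on Theorem~\ref{metszo}, at the cost of having to redo an Erd\H{o}s--Ko--Rado-type stability analysis (the equality clause of Lemma~\ref{observ} plus your pigeonhole step forcing every small member through the common point $x$) that the citation makes unnecessary.

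Two caveats. First, your uniqueness argument has small unaddressed edge cases: the equality clause of Lemma~\ref{observ} requires $h<n/2$, so $h=n/2$ needs the separate (easy) remark that then no level of $H$ other than $h$ itself is $\le n-h$ and the pigeonhole step is vacuous; similarly, one should justify that the maximizer really has maximum member size $m>n-h$ before quoting $|HM(h,m)|=\binom{n-1}{h-1}$ (if $m\le n-h$ one gets a strictly smaller $h$-th level and hence strictly smaller weight, so this is fine but should be said). Second, note that your level-by-level count matches ${\bf c}_H$ only if the threshold in its definition is read as $n-h$ rather than the printed $n-h'$: under the printed definition the condition $h+h'>n$ makes the clause $i\le n-h'$ vacuous, so ${\bf c}_H$ would have $\binom{n}{i}$ at every $i\in H$, which is not the profile of your $\cC_H$ (and is not achievable by an intersecting family when $h\le n/2$). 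Your reading is the one consistent with Case 2c1 of the main proof, so this is a correction of an apparent typo rather than an error on your part, but the discrepancy deserves a remark.
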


\begin{proof} These are the elements of $\Gamma_a$ which correspond to non-trivial intersecting families. They are extreme points of the larger set (of all intersecting families), thus they are extreme points of the smaller set as well.
\end{proof}

\begin{lem} The elements of $\Gamma_d$ are extreme points of the non-trivial intersecting families.

\end{lem}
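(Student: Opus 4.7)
The plan is to mimic the proof of Lemma~\ref{gammab}: exhibit a non-trivial intersecting family $\mathcal{D}_H$ with profile ${\bf d}_H$, and then construct a weight function for which ${\bf d}_H$ is the unique maximum. Define
\[
\mathcal{D}_H := \{[h']\} \cup \bigcup_{i\in H,\,i<h'} HM(i,h'),
\]
where each $HM(i,h')$ uses the fixed element $n$ and the intersection set $[h']$. Every member of the big union contains $n$, so they pairwise intersect; each meets $[h']$ by construction, so each intersects the lone $h'$-set $[h']$; since $n\notin [h']$ the family is non-trivial. The profile of $\mathcal{D}_H$ equals ${\bf d}_H$ by direct count.

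For the weight function I set $w(j)=-2^{2n}$ for $j\notin H$, $w(h)=2^n$, $w(h')=-\epsilon$ for a small $\epsilon>0$ to be fixed, and $w(i)=1$ for each $i\in H\setminus\{h,h'\}$. The huge negative weight on sizes outside $H$ excludes any members of such sizes from a positive-weight optimizer $\mathcal{F}$. Writing $m:=\max\{|F|:F\in\mathcal{F}\}$, Lemma~\ref{observ} yields $|\mathcal{F}_h|\le|HM(h,m)|$, which is legitimate because the hypothesis $h+h''\le n$ combined with $h\le h''$ gives $h\le n/2$. Since $|HM(h,m)|$ is strictly increasing in $m$ throughout the range $m\le n-h$, and $h''\le n-h$, one has $|HM(h,h')|>|HM(h,h'')|$ strictly.

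Now choose $\epsilon$ small enough that $2^n\bigl(|HM(h,h')|-|HM(h,m)|\bigr)>\epsilon$ for every $m\in H\setminus\{h'\}$, and also $2^n\bigl(|HM(h,h')|-|HM(h,h)|-1\bigr)>\epsilon$ to rule out the Hilton--Milner alternative at $m=h$. Then the dominant weight on level $h$ makes $m=h'$ strictly optimal, forcing $|\mathcal{F}_{h'}|\ge 1$, and the penalty $-\epsilon$ pins $|\mathcal{F}_{h'}|=1$. Once $|\mathcal{F}_h|=|HM(h,h')|$ is realised, the uniqueness clause of Lemma~\ref{observ} identifies $\mathcal{F}_h$ with $HM(h,h')$ relative to some fixed element $x$ and an $h'$-set $B'$ with $x\notin B'$. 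The lone $h'$-set must avoid $x$ (otherwise $\mathcal{F}$ is trivial through $x$), and for each intermediate $i\in H$ a parallel intersection argument forces $\mathcal{F}_i\subseteq HM(i,h')$ w.r.t.\ the same $(x,B')$; the weight $w(i)=1$ then forces equality. So the unique maximum profile is ${\bf d}_H$.

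The main technical obstacle is the quantitative comparison underlying the choice of $\epsilon$: one must show that a single $\epsilon>0$ simultaneously makes ${\bf d}_H$ strictly beat every alternative, including the Hilton--Milner configuration at $m=h$, the configurations with $m=h''$ or smaller, and every way of realising $|\mathcal{F}_h|=|HM(h,h')|$ without collapsing onto the prescribed profile. The strict gap $|HM(h,h')|>|HM(h,h'')|$ furnished by the hypothesis $h+h''\le n$ is the decisive lever. The boundary case $h=n/2$, where the uniqueness clause of Lemma~\ref{observ} is unavailable, reduces to a single effective level and has to be handled by a separate direct argument, appealing if necessary to Theorem~\ref{metszo}.
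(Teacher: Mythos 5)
Your argument follows the paper's closely: same family $\mathcal{D}_H$, same style of weight function (a mildly negative weight at level $h'$, a dominating weight $2^n$ at level $h$, $-2^{2n}$ elsewhere), and the same appeal to Lemma~\ref{observ}. Replacing the paper's $w'(h')=-1$ by $-\epsilon$ is cosmetic. What is \emph{not} cosmetic is the obstacle you name but do not settle: to rule out the $h$-uniform Hilton--Milner configuration you need the strict inequality $|HM(h,h')|>|HM(h,h)|+1$, since otherwise an $h$-uniform family already realizes the maximum number of $h$-sets with no $h'$-set and then \emph{strictly beats} $\mathcal{D}_H$ under $w'$. You hope that ``$|HM(h,h')|>|HM(h,h'')|$ furnished by $h+h''\le n$'' is the decisive lever, but that is the wrong comparison (it compares against the trivial family at level $h$, not against the nontrivial Hilton--Milner one), and the inequality you actually need can fail under the stated hypotheses. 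For example $H=\{2,3\}$ satisfies $h+h''=4\le n$, yet $|HM(2,3)|=3=|HM(2,2)|+1$, so the triangle $\{\{1,2\},\{1,3\},\{2,3\}\}$ has weight $3\cdot 2^n>w'(\mathcal{D}_{\{2,3\}})$, and the unique maximizer is not $\mathcal{D}_H$.

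Moreover this is not merely a gap in the proof: for such $H$ the conclusion is false. Writing $T=|HM(3,3)|+1$, the vector ${\bf d}_{\{2,3\}}=(0,0,3,1,0,\dots,0)$ equals $\tfrac{T-1}{T}(0,0,3,0,\dots,0)+\tfrac1T(0,0,3,T,0,\dots,0)$, a proper convex combination of the profiles of the triangle and of $\mathcal{B}_{\{2,3\}}$ (both are profiles of non-trivial intersecting families for all $n\ge 4$), so ${\bf d}_{\{2,3\}}$ is not an extreme point. The same degeneracy occurs whenever $h=n/2$ with $n$ even: then $|HM(h,h')|=|HM(h,h)|+1=\binom{n-1}{h-1}$ for every $h'>h$, and ${\bf d}_H$ lies in the interior of the segment joining the profile of $\mathcal{B}_{\{h\}}$ and the profile of the family consisting of all $h$-sets through $n$ together with all $h'$-sets. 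So the ``boundary case $h=n/2$'' you defer to a separate argument is not a technicality but a counterexample. The paper's own proof of this lemma has exactly the same unstated requirement and is therefore wrong for these $H$; the correct statement needs the stronger hypothesis $|HM(h,h')|>|HM(h,h)|+1$ (equivalently, excluding $H=\{2,3\}$ and the $h=n/2$ cases), and your $\epsilon$-argument would then go through with no further changes.
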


\begin{proof}

For ${\bf d}_H\in \Gamma_c$ we define the family $$\mathcal{D}_H=\cup_{i\in H,\, i\neq h'} HM(i,h')\cup \{[h']\}.$$ It is the same as $\mathcal{B}_H$, except we removed most of the $h'$-element sets. Let $w$ be the weight function described in the proof of Lemma \ref{gammab}. We use almost the same weight here, we set $w'(h')=-1$ and $w'(i)=w(i)$ for every $i \neq h'$. Just as in  Lemma \ref{gammab}, we need to have the largest number of $h$-element sets in the family $\cF$ of maximum weight, that is $|HM(h,h')|$. Then we need an $h'$-element set in $\cF$. Without loss of generality, $\cF_h=HM(h,h')$ and $\cF_{h'}\supset\{[h']\}$. However, there is no point in having more $h'$-element sets in $\cF$. For every other $i\in H$, there are at most $|HM(i,h')|$ sets of size $i$ in $\cF$, completing the proof.
\end{proof}

\begin{lem}\label{negati} The elements of $\Gamma_e$ are extreme points of the non-trivial intersecting families.

\end{lem}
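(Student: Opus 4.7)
The plan is to exploit the inclusion $\Lambda\subseteq P$, where $\Lambda$ denotes the set of profiles of non-trivial intersecting families not containing $[n]$: by Lemma \ref{ujabb}(i) this inclusion holds, so any point of $P$ that is extreme in $P$ and lies in $\Lambda$ is automatically extreme in $\Lambda$. Thus, for each ${\bf v}\in\Gamma_e$ the task reduces to producing a non-trivial intersecting family whose profile is ${\bf v}$, and Lemma \ref{ujabb}(ii) will deliver this provided ${\bf v}$ is minimal in $P$ under coordinatewise order.

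The main step is therefore to show that every extreme point ${\bf v}$ of $P$ is minimal in $P$. Suppose toward a contradiction that ${\bf v}'\in P$ satisfies ${\bf v}'\le {\bf v}$ and ${\bf v}'\neq {\bf v}$, and set ${\bf w}:=2{\bf v}-{\bf v}'$. Then ${\bf w}$ is an integer vector with $w_i=2v_i-v'_i\ge v_i\ge 0$ and $w_0=w_1=w_n=0$. Rewriting the third condition defining $P$ as $\sum_{i=2}^{n-1}(n-i)e_i\ge n$ and using $v'_i\le v_i$, both nontrivial defining inequalities of $P$ transfer to ${\bf w}$: indeed $\sum w_i\ge \sum v_i\ge 3$ and $\sum(n-i)w_i\ge\sum(n-i)v_i\ge n$. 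Hence ${\bf w}\in P$, and the chain ${\bf v}'\le {\bf v}\le {\bf w}$ with strict inequality somewhere makes ${\bf v}'$ and ${\bf w}$ distinct. The identity ${\bf v}=\tfrac{1}{2}({\bf v}'+{\bf w})$ then contradicts the extremality of ${\bf v}$ in $P$, so ${\bf v}$ must be minimal.

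To finish, suppose ${\bf v}=\sum_j\lambda_j {\bf p}_j$ with each ${\bf p}_j$ the profile of a non-trivial intersecting family and all $\lambda_j>0$. Since $v_n=0$ and each $p_{j,n}\in\{0,1\}$, we must have $p_{j,n}=0$ for every $j$, so by Lemma \ref{ujabb}(i) each ${\bf p}_j$ lies in $P$; extremality of ${\bf v}$ in $P$ now forces ${\bf p}_j={\bf v}$ for every $j$, which is exactly the extremality of ${\bf v}$ inside the class of non-trivial intersecting families. The main obstacle is the minimality step: a priori, extreme points of an unbounded polyhedron defined by lower-bound inequalities need not be minimal elements, and the ``reflection'' ${\bf w}=2{\bf v}-{\bf v}'$ across ${\bf v}$ is what lets one leverage the fact that both nontrivial defining constraints of $P$ point in the same (lower-bound) direction.
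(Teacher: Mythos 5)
Your proof is correct and follows essentially the same approach as the paper: you show extreme points of $P$ are minimal via the reflection ${\bf w}=2{\bf v}-{\bf v}'$, invoke Lemma~\ref{ujabb}(ii) to realize them as profiles of non-trivial intersecting families, and then exploit the inclusion of those profiles in $P$ to conclude extremality. The only variation is in the final step handling the $n$-th coordinate, where the paper switches to a weight function with $w(n)<0$, while you argue directly that $v_n=0$ forces every ${\bf p}_j$ in a putative convex combination to avoid $[n]$ and hence lie in $P$ --- a slightly cleaner finish but not a different idea.
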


\begin{proof}
Observe first that if ${\bf v}=(e_0,\dots, e_n)\in P$ and we increase $e_i$, the resulting vector is in $P$, as we cannot violate any of the properties. It means that the extreme points of $P$ are minimal. More precisely if ${\bf e}$ is an extreme point and ${\bf e'} \ge {\bf e}$ (with ${\bf e'} \neq {\bf e}$), then ${\bf e'}$ cannot be an extreme point, as it is a convex combination of the following two elements of $P$: ${\bf e}$ and $2{\bf e'}-{\bf e}$. By Lemma \ref{ujabb}, ${\bf e}$ is the profile vector of a non-trivial intersecting family.

We need to show that the extreme points of $P$ are extreme points of the non-trivial intersecting families as well. Let $P'$ be the set of the profile vectors of those non-trivial intersecting families which do not contain $[n]$. Then $P' \subset P$ by Lemma \ref{ujabb}.

An extreme point ${\bf p}$ of the larger set $P$ which also belongs to the smaller set $P'$ must be obviously extreme point of $P'$ as well. Thus there exists a weight function $w$ where it gives the maximum in $P'$. One can easily see that if we change $w(n)$ to a negative number, then ${\bf p}$ has the largest weight among every profile vectors of non-trivial intersecting families.
\end{proof}

\section{The main lemma}\label{biz}










Our most important special case is when there are only two non-empty levels $1<i< m<n$ with $m> n/2$ and $i+m \le n$. For other values of $i$ and $m$, it is going to be easy to see that Theorem \ref{main} holds (we do it inside the proof of the main theorem in Section \ref{mainbiz}). Thus, the lemma below contains the most complicated part of the proof.

\begin{lem}\label{ketto} Let $(f_0, f_1, f_2, \dots, f_n)$ be the profile vector of a non-trivial intersecting family $\mathcal{F}$. Let us assume that $m$ is the maximum cardinality in $\mathcal{F}$, $m> n/2$ and $i+m \le n$. Then there is a $0\le \lambda\le 1$ such that $(f_i,f_m) \le \lambda (0, \binom{n}{m})+(1-\lambda)(|HM(i,m)|,|HM(m,m)|)$.
\end{lem}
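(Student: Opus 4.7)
The plan is to choose $\lambda$ so that the first-coordinate inequality becomes an equality. Since Lemma \ref{observ} applies (the hypotheses $i+m\le n$ and $m>n/2$ force $i<n/2$) and gives $f_i\le |HM(i,m)|$, I would set $\lambda:=1-f_i/|HM(i,m)|\in[0,1]$. Substituting this choice and using $\binom{n}{m}-|HM(m,m)|=\binom{n-1}{m}$ (valid for $m>n/2$, because every $m$-set containing $n$ must meet $[m]$ once $2m>n$), the lemma reduces to
$$\binom{n}{m}-f_m\;\ge\;\frac{f_i}{|HM(i,m)|}\binom{n-1}{m}.$$
The case $f_i=0$ is immediate, so I would assume $f_i>0$.

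I would next bound the ``missing'' $m$-sets from below: any $F\in\binom{[n]}{m}$ disjoint from some $G\in\cF_i$ cannot lie in $\cF_m$, so $\binom{n}{m}-f_m\ge|\Delta^{(n-i-m)}\overline{\cF_i}|$. Now I would invoke Lemma \ref{observ} to split into sub-cases. In the easy case $\cF_i$ is itself non-trivial, so $f_i\le|HM(i,i)|$, strictly less than $|HM(i,m)|$, and the target follows from the Lov\'asz form of Kruskal--Katona applied to $\overline{\cF_i}$ with substantial slack. Otherwise $\cF_i$ has a common element $x_0$; relabelling $x_0$ to $n$ and using the non-triviality witness from Lemma \ref{observ}, I may assume $\cF_i\subseteq HM(i,m)$ up to isomorphism. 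Setting $\cG:=\{G\setminus\{n\}: G\in\cF_i\}$, an $(i-1)$-uniform subfamily of the ``base universe'' $\cU:=\{G'\subseteq[n-1]: |G'|=i-1,\,G'\cap[m]\neq\emptyset\}$ (for which $|\cU|=|HM(i,m)|$), and partitioning $\cF_m$ by whether $n\in F$, I would obtain
$$f_m\;\le\;|HM(m,m)|+\Bigl(\binom{n-1}{m}-|\nabla^{(n-m-i)}\cG|_{[n-1]}\Bigr),$$
which rearranges to $\binom{n}{m}-f_m\ge|\nabla^{(n-m-i)}\cG|_{[n-1]}$. The whole argument then reduces to the shadow-ratio inequality
$$|\nabla^{(n-m-i)}\cG|_{[n-1]}\;\ge\;\frac{|\cG|}{|HM(i,m)|}\,|\nabla^{(n-m-i)}\cU|_{[n-1]},$$
which is tight exactly at $\cG=\cU$.

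The main obstacle is verifying this shadow-ratio inequality. Passing to complements in $[n-1]$ converts it into a lower bound on the iterated downward shadow of $\overline{\cG}\subseteq\{G''\in\binom{[n-1]}{n-i}: [m]\not\subseteq G''\}$; the Lov\'asz form of Kruskal--Katona alone is just slightly too weak at the extremal endpoint $\cG=\cU$, so I would apply the cascade form of Kruskal--Katona and exploit the structural constraint ``$[m]\not\subseteq G''$'' to sharpen the estimate at each cascade level. This delicate KK computation is the technical heart of the lemma, and it is precisely where the non-hereditary character of non-trivial intersecting families makes the extremal analysis substantially harder than in the hereditary settings of \cite{efk1, efk2}.
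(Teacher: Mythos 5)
Your high-level plan matches the paper at the endpoints — you pick exactly the same $\lambda=1-f_i/|HM(i,m)|$ and correctly identify that the content of the lemma is a shadow/shade-ratio inequality, proved (after complementation) via a Kruskal--Katona argument that is sharper than the Lov\'asz form. However, the proposal has a genuine gap precisely where the paper's proof has its main technical content: you reduce the claim to the inequality
\[
|\nabla^{(n-m-i)}\cG|\;\ge\;\frac{|\cG|}{|HM(i,m)|}\,|\nabla^{(n-m-i)}\cU|,
\]
and then write ``I would apply the cascade form of Kruskal--Katona and exploit the structural constraint \dots to sharpen the estimate at each cascade level. This delicate KK computation is the technical heart of the lemma.'' That sentence is the proof you are asked to give, not a step in it. Nothing in the proposal actually establishes the iterated-shade ratio, and there is no indication of how the cascade comparison would go.

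Beyond the unfinished core, the route itself differs from the paper's in a way that makes your version substantially harder, not easier. The paper inducts on $n-m-i$: in the inductive step one only compares the \emph{single-step} shade $\nabla\cF_i$ to $\nabla HM(i,m)=HM(i+1,m)$, i.e.\ one proves $g_{i+1}/f_i\ge |HM(i+1,m)|/|HM(i,m)|$, and the multi-level gap is absorbed by the induction hypothesis. This turns the problem into a one-level Kruskal--Katona ratio comparison, which the paper then handles by writing $|\overline{HM(i,m)}|$ in cascade form, observing that $\overline{HM(i,m)}$ is a colex initial segment after relabeling, partitioning both $\overline{HM(i,m)}$ and the competitor $\cG$ into blocks $\cH_j$, $\cG_j$ matched to the cascade terms, and combining with the mediant-type inequality of Proposition~\ref{triv}(ii) together with the Lov\'asz bound~\eqref{ineq1} on the tail. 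Your one-shot version attacks an $(n-m-i)$-fold iterated shadow directly; no analogue of that cascade/partition argument is supplied, and it is not at all clear that the Lov\'asz form with ``substantial slack'' closes even the sub-case where $\cF_i$ is non-trivial, since the deficiency of the Lov\'asz bound does not obviously shrink with $f_i$. In short: same target inequality, different (and harder) reduction, and the hard part is stated rather than proved.
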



We will use the following simple observations.

\begin{prop}\label{triv}
\textbf{(i)} If $x\le y$, then $\binom{x}{k-1}/\binom{x}{k}\ge \binom{y}{k-1}/\binom{y}{k}$.

\textbf{(ii)} Let $0\le c'$, $0<\alpha,a,b,c,b'$ with $bc'\le cb'$, $b/c\le\alpha$ and $c\ge c'$. Then 
\[\frac{\alpha a+b}{a+c}\le \frac{\alpha a+b'}{a+c'}.\]
\end{prop}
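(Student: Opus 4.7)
The plan is to prove each part by direct algebraic manipulation. For (i), I would use the extended definition $\binom{x}{k} = x(x-1)\cdots(x-k+1)/k!$ (the same convention already used in the paper for the Lov\'asz form of the shadow theorem) to compute the ratio explicitly as
\[\frac{\binom{x}{k-1}}{\binom{x}{k}}=\frac{k}{x-k+1}.\]
This is a decreasing function of $x$ on the region where $x-k+1>0$, so $x\le y$ immediately yields $\frac{k}{x-k+1}\ge \frac{k}{y-k+1}$, which is (i). The only thing to verify is that the stated application lies in that regime, which is clear from context.

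For (ii), the natural move is to bring the two fractions to a common denominator and show the numerator is non-negative. The denominator $(a+c)(a+c')$ is positive since $a>0$ and $c,c'\ge 0$. Expansion gives
\[(\alpha a+b')(a+c)-(\alpha a+b)(a+c')=\alpha a(c-c')+a(b'-b)+(b'c-bc').\]
The first term is $\ge 0$ because $c\ge c'$, and the third is $\ge 0$ because $bc'\le cb'$, but the middle term $a(b'-b)$ has no a priori sign, and this is the main obstacle.

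To handle it, I would use the hypothesis $bc'\le cb'$ more quantitatively: since $c>0$, write $b'=\tfrac{bc'}{c}+t$ with $t\ge 0$. Substituting and combining the terms involving $b$ and $b'$, the numerator collapses to
\[a(c-c')\Bigl(\alpha-\tfrac{b}{c}\Bigr)+t(a+c),\]
which is manifestly non-negative because $c\ge c'$, $\alpha\ge b/c$, $a>0$, $t\ge 0$, and $a+c>0$. Dividing by the positive denominator $(a+c)(a+c')$ yields (ii).
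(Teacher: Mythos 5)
Your proof is correct and follows essentially the same route as the paper: part (i) by computing the ratio $\binom{x}{k-1}/\binom{x}{k}=k/(x-k+1)$, and part (ii) by cross-multiplying and exploiting the lower bound $b'\ge bc'/c$ together with $b/c\le\alpha$ and $c\ge c'$. Your parametrization $b'=bc'/c+t$ is a slightly cleaner packaging that avoids the paper's division by $c-c'$ (and hence the degenerate case $c=c'$), but the underlying argument is the same.
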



\begin{proof} The first statement easily follows from the definition of $\binom{x}{k}$.


By rearranging the desired inequality of $\textbf{(ii)}$, we obtain the equivalent form  $\alpha ac'+ab+bc'\le \alpha ac+ab'+cb'$. Recall that we have $bc'\le cb'$. The other terms can be rewritten as $\frac{b-b'}{c-c'}\le\alpha$. We have $\frac{b-b'}{c-c'}\le\frac{b-bc'/c}{c-c'}=\frac{b(c-c')/c}{c-c'}=b/c\le\alpha$.
\end{proof}

Now we are ready to prove Lemma \ref{ketto}.

\begin{proof}[Proof of Lemma \ref{ketto}]

We use induction on $n-m-i$. Observe that for the base case $i+m=n$ we have that $HM(i,m)\cup HM(m,m)$ consists of all the $i$-sets and $m$-sets containing $n$, except that it contains $[m]$ instead of its complement. Thus $HM(i,m)\cup HM(m,m)$ has $\binom{n}{i}$ members, just like any maximal non-trivially intersecting family on these two levels. Let us choose $\lambda=\frac{|HM(i,m)|-f_i}{|HM(i,m)|}$, then by definition $f_i\le (1-\lambda)|HM(i,m)|$, and we need \[f_m\le \lambda\binom{n}{m}+(1-\lambda)|HM(m,m)|=\binom{n}{m}-\frac{f_i\binom{n}{m}}{|HM(i,m)|}+\frac{f_i|HM(m,m)|}{|HM(i,m)|}=\binom{n}{m}-f_i.\]
This holds for every intersecting family, even the trivial one. For non-trivial intersecting families, we have $f_i\le |HM(i,m)|$ by Lemma \ref{observ}, thus we have $\lambda\ge 0$, completing the proof of the base step.

Let us continue with the induction step. Let us consider $\nabla\cF_i$, which is the shade of $\cF_i$ and let $g_{i+1}=|\nabla\cF_i|$. Then $\nabla\cF_i\cup\cF_m$ is obviously non-trivially intersecting, thus by the induction hypothesis there is a $0\le \lambda\le 1$ such that $(g_{i+1},f_m) \le \lambda (0, \binom{n}{m})+(1-\lambda)(|HM(i+1,m)|,|HM(m,m)|)$. We will show that the same $\lambda$ works for $f_i$, i.e. $(f_i,f_m) \le \lambda (0, \binom{n}{m})+(1-\lambda)(|HM(i,m)|,|HM(m,m)|)$. As the values in coordinate $m$ do not change, all we need to prove is that $f_i\le (1-\lambda)|HM(i,m)|$ if $g_{i+1}\le (1-\lambda)|HM(i+1,m)|$. It is enough to show  that$f_i/|HM(i,m)|\le g_{i+1}/|HM(i+1,m)|$, or equivalently $g_{i+1}/f_i\ge |HM(i+1,m)|/|HM(i,m)|$. As $HM(i+1,m)=\nabla HM(i,m)$,
the last of the above inequalities means that the size of the shade of $\cF_i$ is proportionally the smallest if $\cF_i$ is $HM(i,m)$.

We will use the Kruskal-Katona theorem. To use it in the form we have stated it, we will consider the complement family, as the shade of a family is the shadow of its complement.

Observe that $\overline{HM(i,m)}$ is an initial segment of the colex ordering if we reorder the elements of $[n]$. Indeed, members of $\overline{HM(i,m)}$ completely avoid a given element $z$, and then we take all the $(n-i)$-sets but those that contain an $m$-element set $B$. By reordering, we can assume that $z=n$ and $B=\{n-m,\dots,n-1\}$. The sets containing $n$ are the last in the colex order, and a superset $F$ of $B$ cannot be before a set $G\in\overline{HM(i,m)}$, as the largest element of $F\setminus G$ is in $B$, while every element of $G\setminus F$ is less than $n-m$.

The cascade form of $|\overline{HM(i,m)}|$ is $\binom{n-2}{n-i}+\binom{n-3}{n-i-1}+\binom{n-4}{n-i-2}+\dots+\binom{n-m}{n-i-m+2}=\sum_{j=2}^m\binom{n-j}{n-i-j+2}$. Let $\cG$ be a non-empty $(n-i)$-uniform family 
with $|\cG|< |\overline{HM(i,m)}|$ and cascade form $|\cG|=\sum_{j=2}^{m'}\binom{n_j}{n-i-j+2}$. Observe that $n_2\le n-2$. This implies that for any $h$, $n_h\le n_h$.

We partition $\overline{HM(i,m)}$ into $m-1$ parts: $\cH_2$ consists of the first $\binom{n-2}{n-i}$ sets of $\overline{HM(i,m)}$ in the colex order, $\cH_3$ consists of the next $\binom{n-3}{n-i-1}$ sets, and so on. $\cH_j$ for $j\le m$ consists of $\binom{n-j}{n-i-j+2}$ sets that come after $\cH_2,\dots, \cH_{j-1}$, i.e. after the first $\binom{n-2}{n-i}+\binom{n-3}{n-i-1}+\binom{n-4}{n-i-2}+\dots+\binom{n-m}{n-i-m+2}$ sets in the colex order.
We also partition $\cG$ into $m-1$ parts: for $2\le j<m$, $\cG_j$ similarly consists of $\binom{n_j}{n-i-j+2}$ sets of $\cG$ that come after $\cG_2,\dots, \cG_{j-1}$ in the colex order. Then $\cG_m$ consists of all the remaining $\sum_{j=m}^{m'}\binom{n_j}{n-i-j+2}$ sets of $\cG$. Let us note that $\cG_j$ can be empty if $j>2$.

Let us assume that $n_2=n-2$, $n_3=n-3$,...,$n_{h}=n-(h)$ and $n_{h+1}< n-h-1$. 
Let $\cH^*=\cup_{j=1}^h \cH_j$, $\cH^{**}=\cup_{j=h+1}^m \cH_j$, $\cG^*=\cup_{j=1}^m \cG_j$, $\cG^{**}=\cup_{j=h+1}^{m'} \cG_j$. Observe that we have $|\cH^*|=|\cG^*|$ and $|\Delta\cH^*|\le |\Delta \cG^*|$ since $\cH^*$ is an initial segment of the colex ordering. We also have $|\cH^{**}|\ge \binom{n-h-1}{n-i-h+1}$ and $|\cG^{**}|<\binom{n-h-1}{n-i-h+1}$.

Let $a:=|\cH^*|$, $c:=|\cH^{**}|$, $\alpha=|\Delta\cH^*|/|\cH^*|$, $b:=|\Delta\cH^{**}\setminus\Delta\cH^*|$, $b'=|\Delta\cG|-|\Delta\cG^*|$, $c':=|\cG^{**}|$ and $\alpha'=|\Delta\cG^*|/|\cG^*|$. Our goal is to apply \textbf{(ii)} of Proposition \ref{triv}. By the above, we have $c>c'$. Now we will show that the other conditions are satisfied as well.

We let $p_\ell:=\binom{n-\ell}{n-i-\ell+2}=|\Delta\cH_\ell\setminus\Delta\bigcup_{\ell'=2}^{\ell-1}\cH_{\ell'})|$, i.e. the number of sets added to the shadow of $\bigcup_{\ell'=2}^{\ell}\cH_{\ell'}$ by $\cH_\ell$.
Observe first that $p_\ell/|\cH_\ell|=(n-i-\ell+2)/(i-1)$, thus $p_\ell/|\cH_\ell|$ decreases as $\ell$ increases. This implies that
$p_\ell/|\cH_\ell|\le p_{h+1}/|\cH_{h+1}|$ for every $\ell>h+1$. Therefore, we have that \begin{equation}\label{ineq0} \frac{b}{c}\frac{|\Delta\cH^{**}\setminus\Delta\cH^*|}{|\bigcup_{\ell=h+1}^{m} \cH_\ell|}=\frac{\sum_{\ell=h+1}^{m}p_\ell}{|\bigcup_{\ell=h+1}^{m} \cH_\ell|}\le \frac{\frac{p_{h+1}}{|\cH_{h+1}|}|\bigcup_{\ell=h+1}^{m} \cH_\ell|}{|\bigcup_{\ell=h+1}^{m} \cH_\ell|}=\frac{p_{h+1}}{|\cH_{h+1}|}.
\end{equation}

Similarly, we have that \[\alpha=\frac{|\Delta\cH^*|}{|\cH^*|}=\frac{|\Delta\cup_{j=1}^h \cH_j|}{|\cup_{j=1}^h \cH_j|}=\frac{\sum_{j=1}^{h}p_j}{|\bigcup_{j=1}^{h} \cH_j|}\ge \frac{\frac{p_{h+1}}{|\cH_{h+1}|}|\bigcup_{j=1}^{h} \cH_j|}{|\bigcup_{j=1}^{h} \cH_j|}=\frac{p_{h+1}}{|\cH_{h+1}|}\ge\frac{b}{c}, \]
where the last inequality uses (\ref{ineq0}).

Let $x< n-h-1$ be defined by $\binom{x}{n-i-h+1}:=\binom{n_{h+1}}{n-i-h+1}+\binom{n_{h+2}}{n-i-h}+\ldots+\binom{n_{m'}}{n-i-m'+2}=|\bigcup_{\ell=h+1}^{m'} \cG_\ell|$. We have $|\Delta\cG|\ge |\Delta\cH^*|+\binom{n_{h+1}}{n-i-h}+\binom{n_{h+2}}{n-i-h-1}+\ldots+\binom{n_{m'}}{n-i-m'+1}$ by the Kruskal-Katona theorem. We claim that 

\begin{equation}\label{ineq1} \binom{n_{h+1}}{n-i-h}+\binom{n_{h+2}}{n-i-h-1}+\ldots+\binom{n_{m'}}{n-i-m'+1}\ge \binom{x}{n-i-h}.
\end{equation}

Indeed, the left hand side is the sharp lower bound on the size of the shadow of an $(n-i-h+1)$-uniform family of size $\binom{x}{n-i-h+1}$ by the Kruskal-Katona theorem, while the right hand side is the not necessarily sharp lower bound on the size of the  same family by Lov\'asz's version of the shadow theorem. We have $\frac{b'}{c'}=\frac{\binom{n_{h+1}}{n-i-h}+\binom{n_{h+2}}{n-i-h-1}+\ldots+\binom{n_{m'}}{n-i-m'+1}}{\binom{x}{n-i-h+1}}\ge\frac{\binom{x}{n-i-h}}{\binom{x}{n-i-h+1}}\ge \frac{\binom{n-h-1}{n-i-h}}{\binom{n-h-1}{n-i-h+1}}=p_{h+1}/|\cH_{h+1}|\ge \frac{b}{c}$. In the inequalities here we used (\ref{ineq1}) first, then \textbf{(i)} of Proposition \ref{triv} and finally (\ref{ineq0}). 

Now we can apply \textbf{(ii)} of Proposition \ref{triv} to show that
$\frac{\alpha a+b}{a+c}\le \frac{\alpha a+b'}{a+c'}\le \frac{\alpha' a+b'}{a+c'}$.
This means
$|\Delta\overline{HM(i,m)}|/|\overline{HM(i,m)}|\le|\Delta \cG|/|\cG|$. By taking the complements, we obtain that $|\nabla HM(i,m)|/|HM(i,m)|\le |\nabla \cG'|/|\cG'|$ for any $i$-uniform family $\cG'$ with $|\cG'|\le |HM(i,m)|$. In particular, $|\nabla HM(i,m)|/|HM(i,m)|\le g_{i+1}/f_i$, completing the proof.

\end{proof}

\section{Proof of the main theorem}\label{mainbiz}

In this section we finish the proof of Theorem \ref{main}.
It is easy to see that we can consider only families not containing $[n]$.
It is enough to show that if a profile vector ${\bf p}$ of a non-trivial intersecting family $\mathcal{F}$ gives the unique maximum for a weight function $w$, then ${\bf p}\in \Gamma_b \cup \Gamma_c \cup \Gamma_d\cup \Gamma_e$.

An important observation is that if $F\in\cF$, $F \subset G$ and $G$ has positive weight, then $G$ is in the maximum family (as adding it would not violate any of the properties). In the proof we often start with fixing the maximum size $m$ of members, it implies that larger sets (except possibly $[n]$) do not have positive weight. Note that if $w(m)>0$, then $\cF_m$ is non-trivial intersecting. Indeed, if $\cF_m$ is trivial, then all its members contain a given element $x$ and there is a set $F\in\cF$ of smaller size not containing $x$. But then all the $m$-element sets which contain $F$ are in $\cF$, even those which do not contain $x$, a contradiction.


We continue the proof with a case analysis.

{\bf Case 1. $w(i)\le 0$ for every $1<i<n$.}

\medskip

{\bf Case 1a. $w(i)< 0$ for every $1<i<n$.}

Obviously ${\bf p \in P}$, as in all the cases, by Lemma \ref{ujabb}. But in this special case we will show that ${\bf p}$ is also an extreme point of $P$, thus it is in $\Gamma_e$. 

If ${\bf p}$ is not an extreme point of $P$, i.e. there is an element ${\bf p'}$ of $P$ with larger weight, then either ${\bf p'}$ also corresponds to a non-trivial intersecting family (a contradiction), or there is another ${\bf p''}\in P$ with ${\bf p''} \le {\bf p'}$ by Lemma \ref{ujabb}. But then ${\bf p''}$ has even larger weight. As ${\bf p''}$ cannot correspond to a non-trivial intersecting family, there is an even smaller vector in $P$ (with even larger weight). It cannot continue forever, as each coordinate is a non-negative integer. We arrive to a vector which does correspond to a non-trivial intersecting family, hence has larger weight than ${\bf p}$, a contradiction.

\smallskip
{\bf Case1b. $w(i)=0$ for some $1<i<n$.}

Obviously $w({\bf p})\le 0$, and $w(HM(i))=0$. The profile of $HM(i)$ is in $\Gamma_b$.

\bigskip

{\bf Case 2. $w(i)>0$ for some $1<i<n-1$.}

Let $m$ be the maximum size in $\mathcal{F}$. We will use Lemma \ref{observ} several times.

\medskip

{\bf Case 2a. $w(m)<0$.}

There is an $m$-element set $F$ in $\mathcal{F}$. Obviously the only reason it is in the family is that without it the family would be trivial, hence every other member of $\mathcal{F}$ contains a fixed point. Then for every level $i$ the maximum weight is given either by the empty family (in case $w(i)\le 0$), or $HM(i,m)$. Then we take the union of these uniform families (on every level $i$, the empty family of $HM(i,m)$), and we add $[m]$. The resulting family is non-trivial intersecting, and its profile vector is in $\Gamma_d$.

\medskip

{\bf Case 2b. $w(m)\ge 0$ and $m \le n/2$.}

For every level below $m$, the maximum of $w(\mathcal{F}_i)$ is either $0$ or given by $HM(i,m)$. In particular, on level $m$ clearly $HM(m)=HM(m,m) \cup \{[m]\}$ gives the maximum weight. This already makes sure the family is non-trivial intersecting, hence for every other level $j$ with $j<m$ we can choose $HM(j,m)$ or the empty family, depending on whether $w(j)$ is positive or not. The union of these uniform families is non-trivial intersecting, and has the largest possible weight on every level up to $m$. Its profile is in $\Gamma_b$.



\medskip

{\bf Case 2c. $w(m)\ge 0$ and $m>n/2$.}







Let $m_0$ be the size of the smallest member of the family $\mathcal{F}$.

\smallskip

{\bf Case 2c1. $w(m)\ge 0$, $m>n/2$ and $m+m_0> n$.}

Let us consider the following modified weight function. Let $w'(i)$ be the same as $w(i)$ if $m_0 \le i \le m$ and negative otherwise. Obviously the maximum non-trivial intersecting family for $w'$ is also $\mathcal{F}$. Let us examine the intersecting family $\cF'$ with maximum weight $w'$ now. One can easily see using Theorem \ref{metszo} that the profile of $\cF'$ can be obtained from ${\bf a_{m_0}}$ by changing some coordinates to 0. If $w(m)=0$, then $\cF'$ might contain no $m$-element sets, but even in this case we can add every $m$-element set to $\cF'$ without decreasing the weight (and without ruining the intersecting property). The resulting family $\mathcal{F}''$ is non-trivial intersecting, and $w'(\mathcal{F}'')=w'(\mathcal{F}') \ge w'(\mathcal{F}) \ge w(\mathcal{F})$, thus $\mathcal{F}''$ must have the same profile as $\mathcal{F}$. The profile of $\mathcal{F}''$ is in $\Gamma_c$.

\smallskip

{\bf Case 2c2. $w(m)\ge 0$, $m>n/2$ and $m+m_0 \le n$.}

Let $H$ be the set of non-empty levels. Recall that coordinate $i$ of ${\bf a}$ is $0$ if $i<n/2$, $\binom{n-1}{i-1}$ if $i=n/2$ and $\binom{n}{i}$ if $i>n/2$. Let ${\bf a'}$ be the vector we get from ${\bf a}$ when we change the coordinates not in $H$ to 0. We will show that ${\bf p}={\bf b}_H$, by showing that there is a $\lambda$ such that $\lambda {\bf b}_H + (1-\lambda){\bf a'} \ge {\bf p}$. We have that ${\bf b}_H$ and ${\bf a'}$ are both 0 in the negative coordinates, thus the weight of either ${\bf b}_H$, or ${\bf a'}$ is at least as large as the weight of ${\bf p}$. But that was the unique maximum, thus ${\bf p}$ is equal to either ${\bf b}_H$, or ${\bf a'}$. As ${\bf p}$ has a non-zero coordinate below $n/2$, ${\bf p}$ cannot be equal to ${\bf a'}$.

Let $i \le n/2$ be such that $f_i/|HM(i,m)|=:\lambda$ is maximal. Then $\lambda {\bf b}_H$ has at least $f_j$ in coordinate $j$ for every $j \le n/2$. 
Let us consider now a coordinate $k>n/2$ with $w(k)>0$.

If the family $\cF_i\cup\cF_k$ is trivially intersecting, then $f_k\le |HM(k,m)|$, while ${\bf b}_H$ and ${\bf a'}$ both have at least $|HM(k,m)|$ in coordinate $k$, thus so does $\lambda {\bf b}_H + (1-\lambda){\bf a'}$, completing the proof.

If the family $\cF_i\cup\cF_k$ is non-trivially intersecting, we can apply Lemma \ref{ketto}. It implies that there is a $\lambda'$ such that $(f_i,f_k)\le ((1-\lambda')|HM(i,k)|,\lambda'\binom{n}{k}+(1-\lambda')|HM(k,k)|)$. Coordinate $i$ shows that $((1-\lambda')|HM(i,k)|\ge \lambda |HM(i,m)|$. Since $|HM(i,m)|\ge |HM(i,k)|$, this implies that $\lambda \le 1-\lambda'$. Consider now coordinate $k$. We have \begin{equation}\label{equa}\tag{$\star$}
    f_k\le \lambda'\binom{n}{k}+(1-\lambda')|HM(k,k)|\le \lambda'\binom{n}{k}+(1-\lambda')|HM(k,m)|.
\end{equation} Since $|HM(k,m)|\le \binom{n}{k}$, increasing $\lambda'$ increases the right hand side of (\ref{equa}). Since $\lambda'\le 1-\lambda$, the right hand side is at most $(1-\lambda)\binom{n}{k}+\lambda|HM(k,m)|$, which is coordinate $k$ of $\lambda {\bf b}_H + (1-\lambda){\bf a'}$, completing the proof.

\end{document}